\numberwithin{equation}{section}
\newtheorem{theorem}{Theorem}[section]
\newtheorem{proposition}[theorem]{Proposition}
\newtheorem{lemma}[theorem]{Lemma}
\theoremstyle{remark}
\DeclareMathOperator{\supp}{supp\,}
\def\be{\begin{equation}}
\def\ee{\end{equation}}
\def\vp{\varphi}
\newcommand{\ltriple}{\lvert\lvert\lvert}
\newcommand{\rtriple}{\rvert\rvert\rvert}
\newcommand{\Bigltriple}{\Big\lvert\Big\lvert\Big\lvert}
\newcommand{\Bigrtriple}{\Big\rvert\Big\rvert\Big\rvert}
\begin{document}
\setlength{\parskip}{2pt}

\title[Almost sure global well posedness for radial NLS on the
3d ball]{Almost sure global well posedness for the radial Nonlinear
Schr\"odinger equation on the unit ball II: the 3d case}
\date{\today}
\author{Jean Bourgain}
\address{(J. Bourgain) Institute for Advanced Study, Princeton, NJ 08540}
\email{bourgain@math.ias.edu}
\author{Aynur Bulut}
\address{(A.Bulut) Institute for Advanced Study, Princeton, NJ 08540}
\email{abulut@math.ias.edu}
\thanks{The research of J.B. was partially supported by NSF grants DMS-0808042 and DMS-0835373 and the research of A.B. was supported by NSF under agreement No. DMS-0808042 and the Fernholz Foundation.}
\begin{abstract}
We extend the convergence method introduced in our works \cite{BB}--\cite{BB2}
for almost sure global well-posedness of Gibbs measure evolutions of the nonlinear
Schr\"odinger (NLS) and nonlinear wave (NLW) equations on the unit ball in 
$\mathbb{R}^d$ to the case of the three dimensional NLS.  This 
is the first probabilistic global well-posedness result for NLS with 
supercritical data on the unit ball in $\mathbb{R}^3$.

The initial data is taken as a Gaussian random process lying in the support 
of the Gibbs measure associated to the equation, and results are obtained almost 
surely with respect to this probability measure.  The key tools used include a 
class of probabilistic {\it a priori} bounds for finite-dimensional projections 
of the equation and a delicate trilinear estimate on the nonlinearity, which -- when 
combined with the invariance of the Gibbs measure -- enables the {\it a
priori} bounds to be enhanced to obtain convergence of the sequence of
approximate solutions.
\end{abstract}
\maketitle

\section{Introduction}

In the work at hand, we continue our study of Gibbs measure evolution for the
nonlinear Schr\"odinger (NLS) and nonlinear wave (NLW) equations on the unit
ball in Euclidean space, initiated in our earlier works \cite{BB}--\cite{BB2}.  In
particular, the aim of the present article is to extend the almost sure
global well-posedness result of \cite{BB2}, which was set on the unit ball in
$\mathbb{R}^2$, to the setting of the unit ball in $\mathbb{R}^3$.  The
techniques involved are a further development of the method introduced in our
work \cite{BB1} for the nonlinear wave equation, combined with a delicate
choice of function spaces adapted to the decay properties of the fundamental
solution of the Schr\"odinger equation.

More precisely, we shall consider the initial value problem for the cubic NLS
on the unit ball $B$ in $\mathbb{R}^3$,
\begin{align*}
\textrm{(NLS)}\quad\left\lbrace\begin{array}{rl}iu_t+\Delta u-|u|^2u&=0\\
u|_{t=0}&=\phi,
\end{array}\right.\label{(1.2)}
\end{align*}
where $u:I\times B\rightarrow\mathbb{C}$, subject to the Dirichlet boundary
condition $u|_{I\times \partial B}=0$, with randomly chosen radial initial
data $\phi$; the sense in which the randomization is taken will be specified
momentarily.

A fundamental property of (NLS) is that the equation takes the form of an
infinite dimensional Hamiltonian system,
\begin{align*}
iu_t=\frac{\partial H}{\partial \overline{u}},
\end{align*}
with conserved Hamiltonian
\begin{align*}
H(\phi)=\frac{1}{2}\int_B |\nabla \phi|^2+\frac{1}{4}\int_B |\phi|^{4}.
\end{align*}

In the case that the spatial domain $B\subset \mathbb{R}^3$ is replaced with
the $d$-dimensional torus $\mathbb{T}^d=\mathbb{R}^d/\mathbb{Z}^d$, a robust
theory of almost sure global well posedness for the Cauchy problem was
established in the seminal works \cite{B1}-\cite{B4} for a variety of general
classes of nonlinearities including both the attractive and repulsive
regimes; see also \cite{B5} for a brief survey of these results.  The
approach pioneered in this line of study was to obtain global control by
exploiting the invariant properties of the Gibbs measure inherent in the
Hamiltonian structure of the equation.

In preparation for our discussion below, we now outline the main steps of the
approach pursued in those works:
\begin{itemize}
\item[(i)] The first step is to consider a finite-dimensional projection of
the Cauchy problem for (NLS), allowing access to an invariant Gibbs measure
which gives global in time estimates for solutions.

\item[(ii)] A strong form of the local well-posedness theory driven by a
contraction mapping principle then allows to show convergence of solutions
for the finite-dimensional problems to a solution of the original equation.
The key point in this step is to obtain estimates which are uniform in the
projection parameter.

\item[(iii)] The two steps above are then combined to establish almost sure
global well-posedness for the original Cauchy problem, (NLS) with no
finite-dimensional projection.

\item[(iv)] The final step in the analysis is to establish the invariance of
the limiting Gibbs measure with respect to the evolution given by the
original, non-projected, (NLS) equation.
\end{itemize}

We remark that the local theory in this approach is a consequence of fixed
point arguments in suitable classes of function spaces.  Although such
results are usually available only for problems in which the initial data is
subcritical or critical with respect to the scaling of the equation,
nevertheless, the randomization gives additional integrability almost surely
in the random variable, and can often enable the application to classes of
supercritical data (see for instance \cite{B3} as well as \cite{BT12}).

On the other hand, in the setting of the present paper a more substantial
obstruction to implementing the approach described above is posed by the lack
of robust Strichartz estimates on domains with boundary, which renders the
fixed point technique ineffective for our purposes.  Indeed, in the current
work our arguments pursue a different path based on the treatment we
introduced for the three-dimensional nonlinear wave equation \cite{BB1} and adapted to
the two-dimensional NLS equation \cite{BB2}.  This approach is again
based on a procedure of finite dimensional projection, with the goal of
showing global well-posedness by establishing convergence for the sequence of
solutions of the projected equations.  However, with the fixed-point argument
unavailable, the proof of convergence follows from a more delicate analysis
of the fine behavior of solutions and their frequency interactions.

More precisely, the strategy in the present paper proceeds in the following
steps:
\begin{itemize}
\item[(i$'$)] Construction of a suitable collection of function spaces used
to establish convergence for the sequence of solutions to the
finite-dimensional projections.  Closely related to this is the
identification of the relevant embeddings and basic interpolation properties
of the spaces.

\item[(ii$'$)] Establishing {\it a priori} bounds for solutions of the
projected equations which remain uniform in the projection parameter.
\item[(iii$'$)] The formulation of an estimate of the contribution of the
nonlinearity.  This estimate is the most delicate stage in the process, and
serves to provide the decay necessary to establish convergence.

\item[(iv$'$)] The above ingredients are then combined to establish
convergence for the sequence of solutions of the projected
equations, almost surely in the randomization.  The limiting function is a
solution of the original equation and is defined for arbitrarily long time
intervals.
\end{itemize}

It is important to note that in our current setting the invariance of the
Gibbs measure is an essential ingredient in obtaining the short-time local
existence result, whereas in the fixed-point based approach of
\cite{B1}-\cite{B5} the local theory is developed independently of the
invariance of the Gibbs measure.  This is a major distinction between the two
approaches, and our use of the Gibbs measure at this stage of the argument
can be seen as the key piece of probabilistic information which allows to
overcome the lack of Strichartz estimates; for a complete discussion of this
issue we refer the reader to our treatment in \cite{BB1}, where the technique
was introduced.

Before giving the precise statement of our main results, we shall now
describe the finite-dimensional projections which form the basis of our
approach.

\subsection{Finite dimensional model and the Gibbs measure}

We shall consider solutions to the {\it truncated equation}
\be\label{(1.1)}
\left\lbrace\begin{array}{rl}iu_t+\Delta u-P_N(|u|^2u)&=0\\
u|_{t=0}&=P_N\phi,
\end{array}\right. 
\ee
where the operator $P_N$ is the projection to low frequencies defined by
\begin{align*} 
P_N\bigg(\sum_{n\in\mathbb{N}} a_ne_n(x)\bigg)=\sum_{n\leq N} a_ne_n(x).
\end{align*}
with $(a_n)\in \ell^2$ and $(e_n)$ as the sequence of radial eigenfunctions
of $-\Delta$ on $B$ with vanishing Dirichlet boundary conditions.

The initial value problem \eqref{(1.1)} is globally well-posed for
every integer $N\geq 1$: indeed, for any initial data $\phi\in L_x^2(B)$,
there exists a unique global solution $u_N:\mathbb{R}\times
B\rightarrow\mathbb{C}$ satisfying the associated {\it Duhamel formula},
\be
u_N(t)=e^{it\Delta}P_N\phi+i\int_0^t
e^{i(t-\tau)\Delta}P_N(|u_N|^2u_N)(\tau)d\tau.\label{(1.2)}
\ee

The {\it Gibbs measure} $\mu_G^{(N)}$ associated to \eqref{(1.1)} is defined
(up to normalization factors) by
\begin{align*}
\mu_G^{(N)}(A)&=\int_A \exp(-H_N(\phi))\prod_{i=1}^N d^2\phi\\
&=\int_A \exp\bigg(-\frac{1}{4}\lVert
P_N\phi\rVert_{L_x^4}^4\bigg)d\mu_F^{(N)}(\phi),\quad A\in\mathcal{M}
\end{align*}
where
\begin{align*}
H_N(\phi)=\frac{1}{2}\sum_{n\leq N} n^2|\widehat{\phi}(n)|^2+\frac{1}{4}\int_B
|P_N\phi(x)|^{4}dx.
\end{align*}
and $\mu_F^{(N)}$ is the free (Weiner) measure induced by the mapping
\begin{align*}
\Omega\ni \omega\mapsto \phi_\omega:=\sum_{n\leq N} \frac{g_n(\omega)}{n\pi}e_n,
\end{align*}
where $(g_n)$ is a sequence of IID normalized complex Gaussian random
variables.

As we will see below, basic facts concerning the sequence of eigenfunctions
$(e_n)$ ensure that the norms
\begin{align*}
\lVert \phi\rVert_{H_x^s(B)},\,\,\, s<\frac{1}{2}\quad \textrm{and}\quad \lVert
P_N\phi\rVert_{L_x^p(B)},\,\,\, p<6
\end{align*}
are finite $\mu_F^{(N)}$-almost surely for every $N\geq 1$.  These facts dictate
the spaces in which we look for solutions, and also serve to ensure that the
measure $\mu_G^{(N)}$ is well-defined, nontrivial and normalizable.  Finally, we
remark that $\mu_G^{(N)}$ is invariant under the evolution of the truncated
equation \eqref{(1.1)}, that is to say
\begin{align*}
\mu_G^{(N)}(\{\phi_\omega:\omega\in
\Omega\})=\mu_G^{(N)}(\{u_N(t):u_N\,\,\,\textrm{solves}\,\,\,\eqref{(1.1)}\,\,\,\textrm{with}\,\,\,\phi=\phi_\omega,\,\omega\in
\Omega\})
\end{align*}
for any $t\in\mathbb{R}$.

We are now ready to state the main result of this paper,
which establishes almost
sure convergence of the sequence of solutions to the truncated equation
\eqref{(1.1)} as the truncation parameter $N$ tends to infinity.

\vspace{0.1in}

\noindent
{\bf Theorem.}
{\it Let $(\Omega, p,\mathcal{M})$ be a given probability space.  For each $N\in
\mathbb{N}$, $\omega\in \Omega$ let $u_N$ denote the solution to
\eqref{(1.1)} with initial data $P_N\phi=P_N\phi^{(\omega)}$.  Then,
almost surely in $\omega$, for  
every $s<1/2$ and $T<\infty$, there exists $u_*\in C_t([0,T);H_x^s(B))$ such that $u_{N}$ converges to $u_*$ 
with respect to the norm $C_t([0,T);H_x^s(B))$.}

\vspace{0.1in}

The proof of the theorem follows the approach described above,  
and can be roughly outlined as consisting of the following steps: (1)
identification of the Fourier restriction spaces $X^{s,b}$ together with
a variant $X_{\ltriple\cdot\rtriple}$ as suitable classes
of function spaces, (2) the derivation of a family of {\it a priori} bounds
which are uniform in the finite-dimensional projection $P_N$, (3) a trilinear
estimate on the nonlinearity which allows to enhance the {\it a priori} bounds
into the decay necessary to establish convergence, and (4) a convergence
argument for $N\to\infty$ which assembles the above ingredients.

The first step in the analysis is the choice of function spaces.  
As is by now familiar in
the study of nonlinear dispersive equations, the spaces $X^{s,b}$ of
\cite{B-GAFA1, B-GAFA2} are the natural spaces to carry out perturbation theory
from the Duhamel formula \eqref{(1.2)}.  An additional component in the
analysis in the present work is the need to consider short time
intervals.  To balance this requirement with the degenerating constant in the
$X^{s,b}$-localization bound
$$
\lVert \psi f\rVert_{X^{s,b}}\lesssim \frac{1}{\delta^{b-1/2}}\lVert
f\rVert_{X^{s,b}},\quad b>\frac{1}{2},
$$
with $\psi(t)=\eta(t/\delta)$, $\delta>0$, where $\eta:\mathbb{R}\rightarrow
[0,1]$ is a smooth function such that $\eta=1$ on $[-1,1]$ and $\supp\eta\subset
[-2,2]$ (see, for instance, [7, Lecture 2]), 
we introduce also the slightly different space $X_{\ltriple\cdot\rtriple}$ 
for which the degenerating constant does not appear.

With the scale of function spaces identified, we next devote our attention to
{\it a priori} bounds for solutions of the truncated equations \eqref{(1.1)},
uniform in the truncation parameter.  We first obtain such
bounds in $L_x^pL_t^q$ norms, and subsequently extend the arguments to $X^{s,b}$
norms.  To obtain the almost sure global well-posedness result of the theorem,
it suffices to establish these bounds up to the exclusion of sets
of small measure in the statistical ensemble.  In view of this, the key
observation is that by exploiting the invariance of the Gibbs measure, it is
enough to establish analogous bounds for functions of the form
\begin{align*}
\sum_{n\in\mathbb{N}} \frac{g_n(\omega)}{n\pi}e_n(x).
\end{align*}

This enables us to combine standard estimates for Gaussian processes
and estimates on the eigenfunctions $e_n$ to obtain the desired bounds.

The next step is to obtain a trilinear estimate on the nonlinear term in the
Duhamel formula.  The argument to establish this bound
proceeds by decomposing each of the three linear factors appearing in the
nonlinearity $F(u)=|u|^2u$ into discrete frequencies and estimating the
resulting frequency interactions.  These estimates are performed using
space-time norms, $X^{s, b}$-spaces and further probabilistic considerations
based on the Gibbs measure invariance.
In fact, we need to distinguish several frequency regions where different
arguments apply.
Introducing these regions requires certain care.

The final step in establishing the theorem is to assemble the above
ingredients to show that the sequence $(u_N)$ of solutions to the truncated
equations \eqref{(1.1)} is almost surely a Cauchy sequence in the space
$C_t([0,T);H_x^s(B))$.  The core step in this argument takes the form of an
estimate for the $X_{\ltriple\cdot\rtriple}$ norm of the
difference $u_{N_1}-u_{N_0}$ for any integers $N_1\geq N_0\geq 1$.  This bound
is of the form 
\be\label{(1.3)}
\rtriple u_{N_1} -u_{N_0}\rtriple \lesssim N_0^{-c} \text { for some $c>0$}
\ee
for all $\omega\in\Omega$ outside a singular set having small measure.
The measures of these exceptional sets need to be sufficiently small in order to
deduce an almost everywhere convergence result.
Of course, large deviation estimates for Gaussian processes are essential here.
The final stage of the argument consists in revisiting the probabilistic claims
in order to justify the required quantitative form.

\section
{Notation and preliminaries}

Throughout our arguments we will frequently make use of a dyadic decomposition
in frequency, writing
\begin{align*}
f(x)=\sum_n \hat{f}(n)e_n(x)=\sum_{N\geq 1}\sum_{n\sim N} \hat{f}(n)e_n(x),
\end{align*}
where for each $n\in\mathbb{Z}$, the condition $n\sim N$ is characterized by
$N\leq n\leq 2N$.

For every $n\in\mathbb{N}$, define
\begin{align}\label{(2.1)}
e_n(x)=\frac{\sin(n\pi |x|)}{|x|}
\end{align}
and recall that $e_n$ is the $n$th radial eigenfunction of $-\Delta$ on $B$,
with associated eigenvalue $n^2$.  With this notation, we have the following
estimates on the norms of the eigenfunctions:
\begin{align}\label{(2.2)}
\lVert e_n\rVert_{L_x^p}\lesssim 1,\quad 1\leq p<3 
\quad\quad\quad\textrm{and}\quad\quad\quad
\lVert e_n\rVert_{L_x^p}\lesssim n^{1-\frac{3}{p}},\quad p>3,
\end{align}
along with the endpoint-type bound $\lVert e_n\rVert_{L_x^3}\lesssim (\log
n)^{1/3}$.  Moreover, the sequence $(e_n)$ also enjoys the following correlation
bound:
\begin{align}\label{(2.3)}
|c(n,n_1,n_2,n_3)|\lesssim \min\{n,n_1,n_2,n_3\},
\end{align}
where we have set
\begin{align}\label{(2.4)}
c(n,n_1,n_2,n_3)=\int_{B} e_n(x)e_{n_1}(x)e_{n_2}(x)e_{n_3}(x)dx.
\end{align}

Another essential tool in our analysis is the following probabilistic estimate
for sums of Gaussian random variables:
\begin{align}\label{(2.5)}
\bigg\lVert \sum_{n}
\alpha_ng_n(\omega)\bigg\rVert_{L^q(d\omega)}&\lesssim \sqrt{q}\big(\sum_{n}
|\alpha_n|^2\big)^{1/2},
\end{align}
where $(\alpha_n)\in \ell^2$, $2\leq q<\infty$, and $(g_n)$ is a sequence of
IID normalized complex Gaussians.

We also have the following multilinear version of the estimate \eqref{(2.5)}:
\be\label{(2.6)}
\bigg\lVert \sum_{n}
\alpha_nh_n(\omega)\bigg\rVert_{L^q(d\omega)}\lesssim (\sqrt{q})^k\bigg\lVert
\sum_{n} \alpha_nh_n(\omega)\bigg\rVert_{L^2(d\omega)}
\ee
for every $k\geq 1$, $2\leq q<\infty$ and each
$h_n$ is a product of at most $k$ Gaussians taken from a sequence $(g_n)$ 
as above.

As a consequence, if $(g_n)$ is a sequence of normalized IID complex Gaussian random variables, the bound
\begin{align}\label{(2.7)}
 \bigg\lVert \sum_{n} \alpha_n\cdot (|g_n(\omega)|^2-1)\bigg\rVert_{L^q(d\omega)}\lesssim
q\big(\sum_{n} |\alpha_n|^2\big)^{1/2}.
\end{align}
holds for every $(\alpha_n)\in\ell^2$ and $1\leq q<\infty$.

In the form \eqref{(2.6)}, we note that the inequality remains valid in the vector-valued
case, with $(\alpha_n)$ as elements of an arbitrary normed space $X$. See [12].

\subsection{Description of the function spaces}

Fix a time interval $I=[0,T)$ with $T>0$ sufficiently small, and let the space $X^{s,b}(I)$ denote the
class of functions $f:I\times B\rightarrow\mathbb{C}$ representable as
\be\label{(2.8)}
f(x,t)=\sum_{n,m} f_{n,m}e_n(x)e(mt),\quad (x,t)\in B\times I
\ee
for which the norm
$$
\lVert f\rVert_{s,b}:=\bigg(\sum_{n,m} \langle n\rangle^{2s}\langle
n^2-m\rangle^{2b}|f_{n,m}|^2\bigg)^{1/2}
$$
is finite, where the infinum is taken over all representations \eqref{(2.8)}.  We also refer the reader
to the works \cite{B-GAFA1}-\cite{B-GAFA2}, where these spaces were first introduced.

Moreover, when $f:I\times B\rightarrow\mathbb{C}$ has a representation \eqref{(2.8)}, we shall define the
function $T_{s,b}f$ via
\be\label{(2.9)}
(T_{s,b}f)(x,t)=\sum_{n,m}\langle n\rangle^{s}\langle n^2-m\rangle^{b}f_{n,m}e_n(x)e(mt).
\ee

Our analysis requires to consider short time intervals $[0, T]$, where $T$ will depend on the truncation
parameters.
In order to establish contractive estimates for the nonlinear term, we need a variant of the 
$\Vert \cdot \Vert_{0, \frac 12}$-norm adapted to the time interval.
We denote this norm by $\ltriple \cdot \ltriple_{0, \frac 12; T}$, and its
unit ball is generated by functions of the form
\be\label {(2.10)}
\sum_{n, m} \frac {a_{n, m}}{(|n^2-m|+\frac 1T)^{\frac 12}} \, e_n(x) e(mt) +\sum_{\substack {n, m\\
|n^2-m|>\frac 1T}} \frac {a_n}{|n^2-m|}\, e_n(x) e(mt)
\ee
with
$$
\sum_{n, m}|a_{n, m}|^2 \leq 1\quad\textrm{and}\quad\sum_n |a_n|^2\leq 1.
$$

Obviously, $\Vert \cdot \Vert_{0, b}\lesssim \ltriple\,\cdot\,\rtriple$ 
for $b<\frac 12$.  One can similarly introduce norms  
$\ltriple\,\cdot\,\rtriple_{s, \frac 12; T}$ for 
$s>0$, but we will not need them for our purposes.

The next few lemmas put into evidence some basic properties of the norm $\ltriple \cdot \rtriple$.

\begin{lemma}\label{Lemma1}
Let $\ltriple f\ltriple\leq 1$. Then
\be\label{(2.11)}
\frac 1T \int_0^T \Vert f(t) \Vert^2_{L^2_x} dt< O(1).
\ee
\end{lemma}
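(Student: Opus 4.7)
The plan is to reduce (2.11) to an estimate on the full space--time $L^2$ norm of $f$, namely to show that $\Vert f\Vert_{X^{0,0}}^2 = \sum_{n,m}|f_{n,m}|^2 \lesssim T$ whenever $\ltriple f\rtriple \le 1$. Once this is established, since the time Fourier expansion in (2.8) together with Plancherel gives
\[
\int_0^T \Vert f(t)\Vert^2_{L^2_x}\,dt \;\le\; \Vert f\Vert_{X^{0,0}}^2,
\]
dividing by $T$ yields (2.11). So the whole task is the bound $\Vert f\Vert_{X^{0,0}}^2 \lesssim T$.

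For this, I would invoke the explicit description of the unit ball of $\ltriple\,\cdot\,\rtriple$ in (2.10): any $f$ with $\ltriple f\rtriple\le 1$ is a superposition, with total coefficient mass $\le 1$, of atoms of the two displayed types. By the triangle inequality for $\Vert\cdot\Vert_{X^{0,0}}$, it therefore suffices to prove $\Vert f\Vert_{X^{0,0}}^2 \lesssim T$ for each atomic form separately.

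For an atom of the first type, $f_{n,m} = a_{n,m}/(|n^2-m|+1/T)^{1/2}$, the bound is immediate from the trivial lower bound $|n^2-m|+1/T \ge 1/T$ on the denominator:
\[
\Vert f\Vert_{X^{0,0}}^2 \;=\; \sum_{n,m}\frac{|a_{n,m}|^2}{|n^2-m|+\tfrac 1T} \;\le\; T\sum_{n,m}|a_{n,m}|^2 \;\le\; T.
\]
For an atom of the second type, $f_{n,m} = a_n/|n^2-m|$ restricted to $\{|n^2-m|>1/T\}$, I would apply the elementary tail estimate $\sum_{|k|>1/T} k^{-2} \lesssim T$ (summing in $m$ with $k=n^2-m$), obtaining
\[
\Vert f\Vert_{X^{0,0}}^2 \;=\; \sum_n |a_n|^2 \sum_{|n^2-m|>1/T}\frac{1}{(n^2-m)^2} \;\lesssim\; T\sum_n |a_n|^2 \;\le\; T.
\]

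No part of this is really an obstacle: both estimates are dictated by the definition in (2.10), and the only substantive observation is that the weight $(|n^2-m|+1/T)^{-1}$ has $\ell^1$-mass $O(T)$ in the variable $m$, which is precisely the scale built into the norm. The one point requiring a small amount of care is the reduction step, where one must check that the two pieces in (2.10) are being combined with total $\ell^1$-coefficient $\le 1$ so that the triangle inequality transmits the bound from atoms to a general unit-ball element.
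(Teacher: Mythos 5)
Your argument is correct, and it takes a genuinely simpler route than the paper's. You exploit that the representation \eqref{(2.10)} is a globally defined (time-periodic) series, so the nonnegative integrand allows you to enlarge $\int_0^T$ to the full period, apply exact Plancherel in $t$ (and orthogonality of the $e_n$, whose $L^2_x$ norms are uniformly bounded, so only a harmless constant enters), and reduce everything to the observation that the weights $(|n^2-m|+\tfrac1T)^{-1}$ and $|n^2-m|^{-2}\chi_{|n^2-m|>1/T}$ each have $\ell^1$-mass $O(T)$ in $m$; the two pieces of \eqref{(2.10)} are then combined by the triangle inequality, exactly as you note. The paper instead stays on the interval $[0,T]$: it splits $\Vert f(t)\Vert_{L^2_x}^2$ into the two contributions $(I)+(II)$, inserts a cutoff $\vp(t/T)$ whose Fourier transform is supported at frequencies $|k|\leq 1/T$, and controls the resulting near-diagonal cross terms $|m-m'|\leq 1/T$ by Cauchy--Schwarz, arriving at the same bound $\int_0^T\Vert f(t)\Vert^2_{L^2_x}dt\lesssim T$. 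What your approach buys is brevity and in fact a slightly stronger conclusion (the bound over the whole period, not just $[0,T]$); what the paper's cutoff argument buys is robustness -- the near-orthogonality technique does not rely on positivity of the integrand and is the same device used in the more delicate duality estimate of Lemma \ref{Lemma2}, so the authors are essentially rehearsing it here. The only point to keep in view in your write-up is the reduction from a general unit-ball element to the representation \eqref{(2.10)} (convex combinations and limits preserve the $L^2$ bound), but the paper makes the identical implicit reduction by simply ``writing $f$ as in \eqref{(2.10)}'', so you are on equal footing there.
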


\begin{proof}
We first write $f$ as in \eqref{(2.10)}.  Then
\begin{align*}
\Vert f(t)\Vert^2_{{L^2_x}} &=\sum_n\bigg|\sum_m \frac {f_{n, m}}{(|n^2-m|+\frac 1T)^{\frac{1}{2}}}
\, e(mt)\bigg|^2+\sum_n \bigg|\sum_{|n^2 -m|>\frac 1T} \ \frac {e(mt)}{n^2-m}\bigg|^2|f_n|^2\\
&= (I)+ (II).
\end{align*}

Taking $0\leq \vp \leq 2$ such that $\vp\geq 1$ on $[0, 1]$ and supp\,$\hat\vp \subset [-1,1]$, we have
$$
\begin{aligned}
\int_0^T (I) dt\leq \int (I) \vp\Big(\frac tT\Big)dt&\leq T\sum_n \, \sum_{\substack{m, m',\\|m-m'|\leq \frac 1T}} \frac{|f_{n,m}|\,|f_{n,m'}|}{|n^2-m|+\frac{1}{T}}\\
&\leq T^2\sum_{|k|\leq \frac 1T} \ \sum_{n, m} |f_{n, m}| \, |f_{n, m+k}|\\
&\lesssim T\Vert f\Vert^2_{L_{t,x}^2}\\
&\lesssim T
\end{aligned}
$$
and similarly
$$
\begin{aligned}
\int_0^T (II)dt&\lesssim T\sum_n \sum_{\substack {m,m',\,|m-m'|\lesssim \frac 1T\\ 
|n^2-m|>\frac 1T,\,|n^2-m'|>\frac 1T}}\ \frac {|f_n|^2}{|n^2-m| \, |n^2-m'|}\\
&\lesssim \sum_{\substack{n, m\\ |n^2-m|>\frac 1T}} \ \frac {|f_n|^2}{|n^2-m|^2} \\
&\lesssim T.
\end{aligned}
$$
The combination of these two bounds suffices to prove the claim.
\end{proof}

The next statement expresses an important duality property with respect to
the Duhamel formula \eqref{(1.2)}.
\begin{lemma}\label{Lemma2}
Assume $\displaystyle f(x, t) =\sum_{{n\in\mathbb Z_+, m\in\mathbb Z}} f_{n, m} e_n(x) e(mt)$.
Then
$$
\Bigltriple \int^t_0 e^{i(t-\tau)\Delta} f(\tau) d\tau\Bigrtriple
\lesssim \max_{\ltriple g\ltriple_{0, \frac 12; T\leq 1}} \Big|\sum_{n,m}
f_{n, m} g_{n, m}\Big|,\eqno{(2.14)}
$$
where $\displaystyle g(x, t) =\sum_{\substack {n\in\mathbb{Z_+}, m\in \mathbb{Z}}} g_{n,m}e_n(x)e(mt)$.
\end{lemma}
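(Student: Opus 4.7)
The plan is to bound the left-hand side by showing that $F(x,t):=\int_0^t e^{i(t-\tau)\Delta} f(\tau)\,d\tau$ lies in the unit ball of $\ltriple\cdot\rtriple_{0,1/2;T}$ scaled by the dual quantity $\ltriple f\rtriple^{*}:=\sup_{\ltriple g\rtriple\leq 1}|\sum f_{n,m}g_{n,m}|$ appearing on the right. Direct integration of the eigenfunction expansion of $f$ yields
\[
F(x,t)=\sum_{m\ne n^2}\frac{f_{n,m}}{i(m-n^2)}\bigl[e_n(x)e(mt)-e_n(x)e(n^2 t)\bigr]+\sum_n f_{n,n^2}\,t\,e(n^2 t)\,e_n(x).
\]
To give this a global Fourier-series representation I would multiply by a smooth cutoff $\eta_T(t):=\eta(t/T)$ with $\eta_T\equiv 1$ on $[0,T]$ and $\supp\eta_T\subset[-2T,2T]$; since $\eta_T F\equiv F$ on $[0,T)$, it suffices to bound $\ltriple\eta_T F\rtriple$. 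A direct computation shows that the Fourier coefficient of $\eta_T F$ at $(n,m')$ reads
\[
(\eta_T F)_{n,m'}=\sum_{m\ne n^2}\frac{f_{n,m}\,\hat\eta_T(m'-m)}{i(m-n^2)}-\beta_n\,\hat\eta_T(m'-n^2)+f_{n,n^2}\,\widehat{\eta_T\cdot t}(m'-n^2),
\]
with $\beta_n:=\sum_{m\ne n^2}f_{n,m}/[i(m-n^2)]$, and the kernels $\hat\eta_T$, $\widehat{\eta_T\cdot t}$ Schwartz in $k$ of widths $\sim 1/T$ and amplitudes $\sim T$, $\sim T^2$ respectively.

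The strategy is to place $\eta_T F$ into form (a) of (2.10) via $a_{n,m'}:=(\eta_T F)_{n,m'}(|n^2-m'|+1/T)^{1/2}$, and to estimate $\|a\|_{\ell^2}$ in terms of the two natural quantities
\[
A^2 := \sum_{n,m}\frac{|f_{n,m}|^2}{|n^2-m|+1/T}, \qquad B^2 := \sum_n \Bigl|\sum_{|n^2-m|>1/T}\frac{f_{n,m}}{n^2-m}\Bigr|^2,
\]
both of which satisfy $A+B\lesssim\ltriple f\rtriple^{*}$ by pairing $f$ against the two families of generators of $\ltriple\cdot\rtriple$. Splitting the first sum in the expression above, and $\beta_n=\gamma_n^{(1)}+\gamma_n^{(2)}$, according to whether $|m-n^2|$ exceeds $1/T$ yields four pieces. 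Three are routine: the high-modulation part of the first sum by Schur's test applied to the convolution kernel $\hat\eta_T$ (whose $\ell^1$-norm is $O(1)$ uniformly in $T$), producing a bound by $A$; the $\gamma_n^{(2)}$-part of the second term using $\sum_k|\hat\eta_T(k)|^2(|k|+1/T)\lesssim 1$ together with $\sum_n|\gamma_n^{(2)}|^2=B^2$; and the resonant third term, whose extra factor of $T^2$ from $\widehat{\eta_T\cdot t}$ produces a bound by $T^{1/2}A$.

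The principal obstacle is the low-modulation piece that remains: the $|m-n^2|\leq 1/T$ part of the first sum paired with $-\gamma_n^{(1)}\hat\eta_T(m'-n^2)$. Estimated in isolation, each of these two produces a spurious factor of order $1/T$ from $\sum_{0<|\ell|\leq 1/T}1/\ell^2\lesssim 1/T$, defeating any direct bound; only the cancellation between them survives. The key identity
\[
\hat\eta_T(m'-n^2-\ell)-\hat\eta_T(m'-n^2)=\int\eta_T(t)\bigl(e(\ell t)-1\bigr)\,e(-(m'-n^2)t)\,dt
\]
identifies the combined contribution (after factoring the phase $e(n^2 t)$) with the Fourier coefficients of $\eta_T(t)\tilde\psi_n(t)$, where $\tilde\psi_n(t):=\int_0^t\tilde f_n^{\mathrm{lo}}(s)\,ds$ is an actual time-antiderivative of the Fourier restriction $\tilde f_n^{\mathrm{lo}}(s):=\sum_{0<|\ell|\leq 1/T}f_{n,n^2+\ell}e(\ell s)$. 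The pointwise bound $|\tilde\psi_n(t)|\lesssim|t|^{1/2}\|\tilde f_n^{\mathrm{lo}}\|_{L^2}$ combined with the Bernstein estimate $\|\tilde f_n^{\mathrm{lo}}\|_{L^\infty}^2\lesssim T^{-1}\sum_{0<|\ell|\leq 1/T}|f_{n,n^2+\ell}|^2$ (valid since $\tilde f_n^{\mathrm{lo}}$ has Fourier support in $|\ell|\leq 1/T$) controls both the $L^2$ and $H^1$ norms of $\eta_T\tilde\psi_n$; interpolation between these then gives $\sum_k(|k|+1/T)|\widehat{\eta_T\tilde\psi_n}(k)|^2\lesssim T\sum_{|\ell|\leq 1/T}|f_{n,n^2+\ell}|^2$. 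Summing in $n$ and using the elementary inequality $\sum_{n,\,0<|m-n^2|\leq 1/T}|f_{n,m}|^2\leq 2A^2/T$ closes the bound by $A$ for this piece as well. Adding the four contributions yields $\|a\|_{\ell^2}\lesssim A+B\lesssim\ltriple f\rtriple^{*}$, which is the claim.
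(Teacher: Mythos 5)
Your argument is correct, and it reaches the bound by a route that differs from the paper's at the decisive step. The overall skeleton is shared: both proofs expand the Duhamel integral as $\sum_{n,m}f_{n,m}e_n\frac{e(mt)-e(n^2t)}{m-n^2}$, split according to $|m-n^2|\gtrless \frac1T$, dualize the resulting coefficient bounds against the two generating families in \eqref{(2.10)} (your quantities $A$ and $B$ are exactly the pairings the paper extracts, with the same harmless sign looseness in the $a_n/|n^2-m|$ family that the paper itself absorbs into unconditionality of the norm), and use a time cutoff at scale $T$ whose Fourier coefficients have amplitude $\sim T$ and width $\sim 1/T$. The difference is in how the low-modulation cancellation between $e(mt)$ and $e(n^2t)$ is exploited: the paper writes this piece as $\sum_{|m-n^2|\le 1/T}f_{n,m}e_n e(n^2t)\frac{e((m-n^2)t)-1}{m-n^2}\vp(t)$ and expands the exponential in a power series, controlling each monomial $(t/T)^s\vp(t)$ through $L^1$ and $H^{1/2}$ bounds with the $1/s!$ providing summability, whereas you identify the piece exactly as $e(n^2t)\,\eta_T(t)\tilde\psi_n(t)$ with $\tilde\psi_n$ a genuine time-antiderivative of the low-modulation restriction of $f_n$, and then close with the elementary chain Cauchy--Schwarz ($|\tilde\psi_n(t)|\lesssim |t|^{1/2}\|\tilde f_n^{\mathrm{lo}}\|_{L^2}$), Bernstein, and an $L^2$--$H^1$ interpolation (via $|k|\lesssim T|k|^2+\frac1T$) to get $\sum_k(|k|+\frac1T)|\widehat{\eta_T\tilde\psi_n}(k)|^2\lesssim T\sum_{|m-n^2|\le 1/T}|f_{n,m}|^2$. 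This is arguably cleaner, avoiding the series expansion and the auxiliary extensions $\psi_s$; the trade-off is that, unlike the paper (which leaves the high-modulation $e(mt)$ part untouched by any cutoff, so it is literally of generator form), you multiply that part by $\eta_T$ too, and your appeal to ``Schur's test'' there needs one more line: the weight $(|n^2-m'|+\frac1T)^{1/2}$ must be split via $|n^2-m'|\le|n^2-m|+|m-m'|$, with the $|m-m'|^{1/2}$ loss paid by the $\ell^1$ bound $\sum_k|k|^{1/2}|\hat\eta_T(k)|\lesssim T^{-1/2}$ against the gain $\sum_m|f_{n,m}|^2/|m-n^2|^2\lesssim T A^2$ on the high modulations. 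With that line added, all four pieces sum to $A+B\lesssim \max_{\ltriple g\rtriple\le1}|\sum f_{n,m}g_{n,m}|$, and the handling of the resonant $m=n^2$ term (which the paper folds into its power series) as a separate $O(T^{1/2}A)$ contribution is also fine.
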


\begin{proof}
Write
$$ \int^t_0 e^{i(t-\tau)\Delta} f(\tau)d\tau =\sum_{n,m} f_{n, m}\, e_n \frac {e(mt) - e(n^2t)}{m-n^2}
$$
and decompose this as
$$
\hspace{0.1in}\sum_{|m-n^2|>\frac 1T} \frac {f_{n, m}}{m-n^2} \, e_n \, e(mt)\eqno{(2.15)}\\
$$
$$
-\sum_{|m-n^2|> \frac 1T} \, \frac {f_{n, m}}{m-n^2} \, e_n \, e(n^2t)\eqno{(2.16)}\\
$$
$$
\hspace{0.3in} +\sum_{|m-n^2|\leq\frac 1T} f_{n, m} e_n\, \frac {e(mt)-e(n^2t)}{m-n^2}.\eqno{(2.17)}
$$
Hence we may write (2.15) as
$$
\sum_{|m-n^2|> \frac 1T} \frac{ b_{n, m}}{|m-n^2|^{\frac 12}} e_n \, e (mt)
$$
with
$$
b_{n, m} =\frac {\pm f_{n, m}}{|m-n^2|^{\frac 12}},
$$
which satisfies
\begin{align*}
\Big(\sum_{n,m}|b_{n, m}|^2\Big)^{\frac 12} &= \Big(\sum_{|m-n^2|>\frac 1T} \ \frac
{|f_{n, m}|^2}{|m-n^2|}\Big)^{\frac 12}
= \max\bigg|\sum_{|m-n^2|>{\frac 1T}} f_{n, m} \frac {a_{n, m}}{|m-n^2|^{\frac
12}}\bigg|
\end{align*}
where the maximum is over sequences $(a_{n,m}$) with
$$
\Big(\sum_{n,m} |a_{n, m}|^2\Big)^{1/2}\leq 1,
$$
which takes care of the contribution of (2.15) to the left-hand side of (2.14).

Next, let $$\vp(t) = \sum_k\hat\vp(k) e(kt) $$ satisfy $\vp =1$ on $[0, T],
\vp\geq 0$ together with the condition $|\hat\vp(k)|\lesssim \frac T{(1+|k|T)^2}$.

For $0\leq t\leq T$, write (2.16) as
$$
\Big[\sum_n b_n e_n e(n^2 t)\Big] \vp(t) =\sum_{n, k} b_n e_n
e\big((n^2+k)t\big)
\hat\vp (k)\eqno{(2.18)}
$$
with
$$
b_n=\sum_{|m-n^2|>\frac 1T} \ \frac {f_{n, m}}{m-n^2}.
$$
Thus (2.18) becomes
$$
\sum_{|m-n^2|\lesssim T} \ \frac {a_{n, m}}{|n^2-m|^{\frac 12}}
\, e_n \, e(mt)
$$
with $a_{n,m} =b_n|n^2-m|^{\frac 12}\hat\vp (n^2-m)$ and
\begin{align*}
\Big(\sum_{n, m} |a_{n, m}|^2\Big)^{\frac 12} &=\Big(\sum_n |b_n|^2\sum_k |k| \,
|\hat\vp(k)|^2\Big)^{\frac 12} \\
&\lesssim \Big(\sum_n |b_n|^2\Big)^{\frac 12}\\
&=\max \,\bigg|\sum_{n, m} \, f_{n, m}  \frac {a_n}{|m-n^2|} \chi_{|m-n^2|>\frac{1}{T}}\bigg|.
\end{align*}
with maximum taken over $(a_n)$ such that
\begin{align*}
\sum_n |a_n|^2\leq 1
\end{align*}
which is the desired estimate for the contribution of (2.16).

Finally, for $0\leq t\leq T$ and $\vp$ as above, write (2.17) as
$$
\sum_{|m-n^2|\leq\frac 1T} f_{n, m} \, e_n \, e(n^2t)
\frac {e\big((m-n^2)t\big)-1}{m-n^2} \vp (t)
$$
and expand the exponential in a power series
$$
\sum_{s\geq 1} \frac 1{s!} \Big[\sum_n b_n^{(s)} \, e_n \, e(n^2t)\Big]
\Big(\frac tT\Big)^s \vp(t)\eqno {(2.19)}
$$
with
$$
b_n^{(s)}=\sum_{|m-n^2|\leq \frac 1T} f_{n, m} (m-n^2)^{s-1}T^s
$$
to obtain
\begin{align*}
\Big(\sum_n|b_n^{(s)}|^2\Big)^{\frac 12}&\leq
T\bigg[\sum_n\Big(\sum_{|m-n^2|\leq \frac 1T}
|f_{n, m}|\Big)^2\bigg]^{\frac 12}
\leq \sqrt T\Big(\sum_{\substack{|m-n^2|\leq \frac 1T}} |f_{n, m}|^2\Big)^{\frac
12}.
\end{align*}

For each $s$, let $\psi_s(t) =\sum_k \hat\psi_s e(kt)$ be an extension of
$\big(\frac tT\big)^s$, $0\leq t\leq T$ such that $$|\psi_s|\leq 2\quad\textrm{and}\quad |\psi_s'|
\leq 10sT^{-1}.$$
Then $$|\widehat{\vp\psi_s}(k)| \leq\Vert\vp\psi_s\Vert_{L_x^1}\leq
2\Vert\vp\Vert_{L_x^1}\lesssim T$$ and $$\Vert\vp\psi_s\Vert_{H^{\frac 12}}\lesssim
\Vert\vp\psi_s\Vert_{L_x^2}^{\frac 12} (\Vert\vp'\psi_s\Vert_{L_x^2}
+\Vert\vp\psi_s'\Vert_{L_x^2})^{\frac 12} \lesssim T^{\frac 14} (T^{-\frac
12}+sT^{-\frac 12})^{\frac 12}\lesssim s^{\frac 12},$$
which in view of (2.19) gives the desired representation of (2.17).
\end{proof}

The norm $\ltriple \cdot \rtriple$ does not quite control the
$L^\infty_{0\leq t\leq T} L^2_x$-norm.
However, the following holds, which will suffice for our purpose.

\begin{lemma}\label{Lemma3}
Let $f$ and $g$ have expansions as in Lemma $\ref{Lemma2}$.  Then
$$
\Big|\sum_{n,m} f_{n,m} g_{n,m}\Big|\lesssim T\ltriple f\rtriple\cdot
\ltriple g\rtriple.
$$
\end{lemma}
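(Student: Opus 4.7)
The plan is to use the representation \eqref{(2.10)} to expand each of $f$ and $g$, then estimate the resulting four bilinear pieces by routine Cauchy--Schwarz arguments.

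By the homogeneity of $\ltriple\,\cdot\,\rtriple$, we may assume $\ltriple f\rtriple\leq 1$ and $\ltriple g\rtriple\leq 1$. The definition of the norm then provides sequences $(a_{n,m})$, $(a_n)$, $(b_{n,m})$, $(b_n)$ in the unit ball of $\ell^2$ such that
\[
f_{n,m}=\frac{a_{n,m}}{(|n^2-m|+\tfrac 1T)^{1/2}}+\frac{a_n\,\chi_{\{|n^2-m|>1/T\}}}{|n^2-m|},
\]
and analogously for $g_{n,m}$ with $b$'s in place of $a$'s. Substituting into $\sum_{n,m}f_{n,m}g_{n,m}$ and expanding produces four bilinear pieces $\mathrm{I}+\mathrm{II}+\mathrm{III}+\mathrm{IV}$ corresponding to the four choices of terms from the two decompositions, each of which will be shown to be $O(T)$.

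For the diagonal term $\mathrm{I}=\sum_{n,m} a_{n,m}b_{n,m}(|n^2-m|+\tfrac 1T)^{-1}$, I would use the pointwise bound $(|n^2-m|+\tfrac 1T)^{-1}\leq T$ together with Cauchy--Schwarz in $(n,m)$. For the cross terms $\mathrm{II}$ and $\mathrm{III}$, where a half-power weight is paired with a full weight under the restriction $|n^2-m|>1/T$, I would apply Cauchy--Schwarz first in $m$ for fixed $n$, using the elementary estimate $\sum_{|k|>1/T}\tfrac{1}{(|k|+1/T)|k|^2}\leq\sum_{|k|>1/T}|k|^{-3}\lesssim T^2$, followed by Cauchy--Schwarz in $n$ against the remaining $|a_n|$ or $|b_n|$. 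For the off-diagonal term $\mathrm{IV}=\sum_n a_n b_n\sum_{|n^2-m|>1/T}|n^2-m|^{-2}$, the inner sum in $m$ is bounded by $\sum_{|k|>1/T}|k|^{-2}\lesssim T$, and a final Cauchy--Schwarz in $n$ closes the estimate.

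The only point requiring attention is pairing each weight from \eqref{(2.10)} with the correct integrability tail near $|n^2-m|\sim 1/T$; each of the four sums produces exactly one factor of $T$, and summing the four bounds yields the claim. I do not expect any deeper obstacle.
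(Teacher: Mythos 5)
Your proposal is correct and follows essentially the same route as the paper: expand both factors via the representation \eqref{(2.10)}, split into the four bilinear pieces, and bound each by Cauchy--Schwarz using the elementary tail sums $\sum_{|k|>1/T}|k|^{-3}\lesssim T^2$ and $\sum_{|k|>1/T}|k|^{-2}\lesssim T$, each piece contributing $O(T)$.
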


\begin{proof}
From the representation \eqref{(2.10)}, we obtain
\begin{align*}
\sum_{n,m}|f_{n,m}| \ |g_{n,m}|&\lesssim \sum_{n, m} \frac{a_{n,m}b_{n,m}}{|n^2-m|+\frac 1T} +\sum_{|n^2-m|>\frac 1T}
\frac {a_n b_{n, m}}{|n^2-m|^{3/2}}\\
&\hspace{0.4in}+\sum_{|n^2-m|>\frac 1T}
\frac {a_{n, m} b_n}{|n^2-m|^{3/2}}+\sum _{\substack{|n^2-m|>\frac 1T}} \, \frac{a_nb_n}{|n^2-m|^2}
\end{align*}
with $$
\sum_{n,m}|a_{n,m}|^2\leq 1,\quad \sum_{n,m}|b_{n, m}|^2\leq 1,\quad \sum_n|a_n|^2\leq 1,
$$ and $$
\sum_n |b_n|^2\leq 1.$$

By the Cauchy-Schwarz inequality, the first term is bounded by $T$, while the 
second term is bounded by
$$
\Big\{\sum_n\Big(\sum_{\{m: |n^2-m|>\frac 1T\}} \ \frac
{|b_{n,m}|}{|n^2-m|^{3/2}}\Big)^2
\Big\}^{\frac 12} \lesssim T.
$$

The estimate for the third term is similar.  Estimating the last term,
we obtain the bound
$$T\sum_n a_nb_n\lesssim T,$$
which allows to complete the lemma.
\end{proof}

Next, we establish several inequalities bounding suitable $L_x^p L_t^q$-norms
in terms of $X^{s, b}$-norms.  These will be essential to our analysis.
\begin{lemma}\label{Lemma4}
The spaces $X^{s,b}$ obey the following embedding relations:
\begin{enumerate}
\item[(i)] For $2<p<3$ and $b_1>\frac{1}{4}$,
\begin{align*}
\lVert f\rVert_{L_x^{p}L_t^2}\lesssim \lVert f\rVert_{{0,b_1}}.
\end{align*}
\item[(ii)]  For $3<p<6$, $s>1-\frac{3}{p}$ and $b_2>\frac{1}{2}$,
\begin{align*}
\lVert f\rVert_{L_x^pL_t^4}\lesssim \lVert f\rVert_{{s,b_2}}.
\end{align*}
\item[(iii)] For $\frac{1}{4}<b_3<\frac{1}{2}$ and $\epsilon>0$,
\begin{align*}
\lVert f\rVert_{L_x^3L_t^\frac{4}{3-4b_3}}\lesssim \lVert
f\rVert_{{\epsilon,b_3}}.
\end{align*}
\item[(iv)] For $b_4>\frac 12$ and $s>\frac 12$,
\begin{align*}
\Vert f\Vert_{L_x^3 L_t^\infty}\lesssim \Vert f\Vert_{{s, b_4}}.
\end{align*}
\item[(v)] For $3\leq p\leq 6, 4\leq q\leq \infty, s>\frac 32-\frac 3p-\frac 2q$ 
and $b_5>\frac 12$
\begin{align*}
\Vert f\Vert_{L_x^pL_t^q}\lesssim \Vert f\Vert_{{s, b_5}}.
\end{align*}
\item[(vi)] For $\frac14<b_6<\frac 12, 3<p< \frac 6{3-4b_6}, \frac 4{3-4b_6}
< q<\infty$, and $s> \frac 52- \frac 3p-\frac 2q-2b_6$,
\begin{align*}
\Vert f\Vert_{L^p_x L_t^q} \lesssim \Vert f\Vert_{{s, b_6}}.
\end{align*}
\item [(vii)] For $2\leq p<\frac 83$ and $b_7>\frac 12$, 
\begin{align*}
\Vert f\Vert_{L^p_xL^p_t}\lesssim \Vert f\Vert_{{0, b_7}}.
\end{align*}
\item[(viii)] For $\frac 14 < b_8<\frac 12, p<\frac {24}{4b_8+7}$, and
$q<\frac 8{5-4b_8}$,
\begin{align*}
\Vert f\Vert_{L^p_xL^q_t}\lesssim \Vert f\Vert_{{0, b_8}}.
\end{align*}
\end{enumerate}
\end{lemma}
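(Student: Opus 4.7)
The plan is to split the eight inequalities into two groups according to the size of $b$. The four cases with $b > 1/2$---namely (ii), (iv), (v), (vii)---reduce via the standard transference principle to analogous bounds for the free Schr\"odinger evolution $e^{it\Delta}\phi$. The four remaining cases (i), (iii), (vi), (viii), where $1/4 < b < 1/2$, require a dyadic decomposition in the modulation variable together with a Cauchy--Schwarz gain arising from the geometry of the set $\{n : |n^2-m|\sim M\}$.

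For the first group, the transference step proceeds as follows. Changing variables $m = n^2 + \mu$ in \eqref{(2.8)} exhibits $f$ as a superposition of time-modulated free solutions, $f(x,t) = \sum_\mu e(\mu t)(e^{it\Delta}v_\mu)(x)$, where $v_\mu(x) = \sum_n f_{n,n^2+\mu} e_n(x)$ and $\|f\|_{s,b}^2 \sim \sum_\mu \langle \mu\rangle^{2b}\|v_\mu\|_{H^s}^2$. Cauchy--Schwarz in $\mu$, valid since $\sum_\mu \langle \mu\rangle^{-2b} < \infty$ for $b>1/2$, reduces the desired inequality to the free-evolution bound $\|e^{it\Delta}\phi\|_{L^p_x L^q_t}\lesssim \|\phi\|_{H^s}$. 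For (iv), where $q=\infty$ and $p=3$, I would bound $|e^{it\Delta}\phi(x,t)|\leq \sum_n |\hat\phi(n)||e_n(x)|$, apply Cauchy--Schwarz in $n$ with weight $\langle n\rangle^{-2s}$ (summable for $s>1/2$), take $L^3_x$ via Minkowski, and invoke \eqref{(2.2)}. For (ii) and (v), the same strategy works after an additional one-dimensional Sobolev embedding in $t$ (to pass from $L^2_t$ to $L^q_t$ for $q>2$), combined with \eqref{(2.2)} in the appropriate $L^p_x$ range. For (vii) in the range $p<8/3$, one needs a genuine diagonal Strichartz bound $\|e^{it\Delta}\phi\|_{L^p_{x,t}}\lesssim \|\phi\|_{L^2}$ on $B$; in the radial setting this reduces, via the substitution $u \leftrightarrow \tilde u/|x|$, to Bourgain's $\epsilon$-loss Strichartz inequality on the one-dimensional torus.

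For the second group, decompose $f = \sum_{M\text{ dyadic}} f_M$, where $f_M$ is supported on modes with $|n^2-m|\sim M$. The key geometric input is that for each fixed $m$, $\#\{n : |n^2-m|\sim M\} = O(M^{1/2})$. After Parseval in $t$ one has
\begin{align*}
\|f_M(x,\cdot)\|_{L^2_t}^2 = \sum_m \Bigl|\sum_n (f_M)_{n,m}\, e_n(x)\Bigr|^2,
\end{align*}
and Cauchy--Schwarz in $n$ produces a factor of $M^{1/2}$, i.e.\ $M^{1/4}$ on the square-root norm. For (i), combining with Minkowski in $x$ and $\|e_n\|_{L^p}\lesssim 1$ (valid for $p<3$) yields
\begin{align*}
\|f_M\|_{L^p_x L^2_t} \lesssim M^{1/4}\|f_M\|_{L^2_{t,x}} \lesssim M^{1/4-b_1}\|f_M\|_{0,b_1},
\end{align*}
and dyadic summation in $M$ succeeds precisely when $b_1>1/4$. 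Cases (iii), (vi), (viii) follow either by complex interpolation between (i) (or a variant in which $p>3$ is allowed at the cost of an eigenfunction loss) and an endpoint from the first group, or by running the dyadic argument directly, using \eqref{(2.2)} in the relevant $L^p_x$ range together with H\"older in time to pass from $L^2_t$ to $L^q_t$.

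The principal obstacle I anticipate is the precise tracking of the regularity threshold $s$ in (v) and (vi): for $p>3$ the eigenfunction loss $\|e_n\|_{L^p}\lesssim n^{1-3/p}$ must be absorbed into $\langle n\rangle^s$, and the stated thresholds $s > 3/2 - 3/p - 2/q$ in (v) and $s > 5/2 - 3/p - 2/q - 2b_6$ in (vi) arise from balancing this loss against the dyadic-in-$M$ summation budget (which costs $M^{1/4-b}$ per level) once one also decomposes dyadically in the spatial frequency $N\sim n$. Verifying that this bookkeeping is tight over the full two-parameter range of $(p,q)$ in (v), (vi), (viii), and in particular at the corner points where $p$, $q$, and $b$ approach their extreme values, is the most delicate part of the argument.
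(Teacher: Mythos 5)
Your treatment of (i) and (iv) agrees with the paper's, and your plan to obtain (iii), (vi), (viii) by interpolation is also how the paper proceeds ((iii) from (i)+(ii), (v) from (ii)+(iv), (vi) from (i)+(v), (viii) from (i)+(vii)). The genuine gap is in the two primary estimates that carry the arithmetic content, namely (ii) and (vii). For (ii), transference to the free evolution followed by a one-dimensional Sobolev embedding in $t$ cannot reach the stated threshold $s>1-\frac 3p$: on a free solution $\sum_n a_n e_n(x)e(n^2t)$ the time frequencies are $n^2$, so passing from $L^2_t$ to $L^4_t$ by Sobolev costs $\langle n^2\rangle^{1/4}=\langle n\rangle^{1/2}$, and combined with the eigenfunction loss $\lVert e_n\rVert_{L^p_x}\lesssim n^{1-3/p}$ this only yields $s>\frac 32-\frac 3p$, half a derivative worse than claimed (correspondingly $s>2-\frac 3p-\frac 2q$ instead of $\frac 32-\frac 3p-\frac 2q$ in (v)). The paper recovers this extra $\frac12$ by genuine $L^4_t$ almost-orthogonality: it expands the square, applies Plancherel in $t$, and invokes the lattice-point bound (2.21), $\sup_\ell |\{(n,n')\in[0,N]^2: n^2+(n')^2=\ell\}|\lesssim N^\epsilon$, so the only loss at spatial frequency $N$ is $N^{0+}$ beyond the eigenfunction factor. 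This divisor-type counting is the key idea absent from your proposal, and without it (ii) -- and hence (iii), (v), (vi), which inherit it -- cannot be proved at the stated regularities; your closing remark that the bookkeeping should ``balance'' will not close the half-derivative deficit.

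For (vii), the substitution $u=\tilde u/|x|$ does not reduce the claim to the one-dimensional torus Strichartz inequality: for $p>2$ one has $\int_B|u|^p\,dx=\int_0^1|\tilde u(r)|^p\,r^{2-p}\,dr$ with a weight singular at $r=0$, so the torus bound does not dominate the ball norm, and a clean reduction would in any case suggest the full $L^4_{x,t}$ range rather than $p<\frac 83$. The paper instead localizes to dyadic annuli $|x|\sim\rho$, proves an $L^4_{x,t}$ bound of size $\rho^{-1/2}$ (using that the number of pairs $(n,n')$ with $n\pm n'=k$ and $n^2+(n')^2=\ell$ is $O(1)$) together with an $L^2_{x,t}$ bound of size $\rho^{1/2}$, then interpolates and sums over $\rho=2^{-j}$; the exponent $\frac 83$ is precisely the point where this dyadic summation converges. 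Your proposal neither accounts for the singular weight nor explains where the threshold $\frac 83$ comes from.
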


\begin{proof}
We begin with (i).  Let $2<p<3$ be given.  Then for every $f$ as in
\eqref{(2.8)}, applying the Plancherel identity in time followed by the
Minkowski inequality, the eigenfunction estimate \eqref{(2.4)} and the
Cauchy-Schwarz inequality, we have
\begin{align*}
\lVert f\rVert_{L_x^pL_t^2}&\lesssim \bigg(\sum_m \bigg\lVert \sum_n f_{m,n}
e_n(x)\bigg\rVert_{L_x^p}^2\bigg)^{1/2}\\
&\lesssim \bigg(\sum_m\bigg(\sum_n |f_{m,n}|\bigg)^2\bigg)^{1/2}\\
&\lesssim \bigg(\sum_m \bigg(\sum_n \langle
m-n^2\rangle^{2b}|f_{m,n}|^2\bigg)\bigg(\sum_n \frac{1}{\langle
m-n^2\rangle^{2b}}\bigg)\bigg)^{1/2}.
\end{align*}
Observing that $b>\frac{1}{4}$ implies
\begin{align*}
\sup_m\sum_n \frac{1}{\langle m-n^2\rangle^{2b}}<\infty
\end{align*}
then establishes (i) as desired.

We now turn to (ii), for which we argue as in the proof of \cite[Lemma $2.2$]{BB2}.  
Let $3<p<6$ be given.  Then, writing \eqref{(2.8)} in the form
\begin{align*}
f(t,x)&=\sum_{m}\bigg(\sum_n f_{m+n^2,n}e_n(x)e(n^2t)\bigg)e(mt),
\end{align*}
we perform a dyadic decomposition into intervals $m\sim M$, $n\sim N$, expand
the square inside the norm $\lVert \,|\cdot
|^2\,\rVert_{L_x^{p/2}L_t^2}^{1/2}$, and use the Plancherel identity in the
$t$ variable to obtain
$$
\begin{aligned}
\nonumber \lVert f\rVert_{L_x^pL_t^4}&\lesssim \sum_{M,N}\sum_{m\sim M}
\bigg\lVert \sum_{n\sim N}
f_{m+n^2,n}e_n(x)e(n^2t)\bigg\rVert_{L_x^pL_t^4}\\
\nonumber&\lesssim \sum_{M,N} \sum_{m\sim M} \bigg\lVert
\bigg(\sum_{\ell}\bigg|\sum_{\stackrel{n,n'\sim N}{n^2+(n')^2=\ell}}
f_{m+n^2,n}f_{m+(n')^2,n'}e_n(x)e_{n'}(x)\bigg|^2\bigg)^{1/2}\bigg\rVert_{L_x^{p/2}}^{1/2}\\
&\lesssim \sum_{M,N} \sum_{m\sim M}
\bigg(\sup_{\ell}\sum_{\stackrel{n,n'\sim
N}{n^2+(n')^2=\ell}}1\bigg)^{1/4}\bigg\lVert \bigg(\sum_{n\sim N}
|f_{m+n^2,n}|^2e_n(x)^2\bigg)\bigg\rVert_{L_x^{p/2}}^{1/2}.
\end{aligned}
\eqno{(2.20)}
$$
where we have used the Cauchy-Schwarz inequality to obtain the last bound.

Note that arithmetic considerations associated with lattice points on circles
(see, for instance [10, Lemma 2.1] and the comments in the proof of
[10, Lemma 2.2]) entail the bound
$$
\sup_{\ell\geq 0} \, \Big|\{(n,n')\in [0,N]^2:n^2+(n')^2=\ell\}\Big|\lesssim
N^\epsilon\eqno{(2.21)}
$$
for any $\epsilon>0$ (where the implicit constant may depend on $\epsilon$).

Set $\epsilon=s-(1-\frac{3}{p})$.  Then, using (2.21) followed by
the Minkowski inequality, the eigenfunction estimates (2.21),
and the Cauchy-Schwarz inequality in the summation over $m\sim M$,
\begin{align*}
(2.20) &\lesssim \sum_{M,N} \sum_{m\sim M} N^{\epsilon/4} \bigg(\sum_{n\sim N}
|f_{m+n^2,n}|^2\lVert e_n(x)\rVert_{L_x^{p}}^2\bigg)^{1/2}\\
&\lesssim \sum_{M,N} \sum_{m\sim M} N^{\epsilon/4} \bigg(\sum_{n\sim N}
n^{2-\frac{6}{p}}|f_{m+n^2,n}|^2\bigg)^{1/2}\\
&\lesssim \sum_{M,N} N^{\epsilon/4}M^{\frac{1}{2}}\bigg(\sum_{m\sim M}
\sum_{n\sim N} n^{2-\frac{6}{p}}|f_{m+n^2,n}|^2\bigg)^{1/2}\\
&\lesssim \sum_{M,N} N^{-3\epsilon/4}M^{\frac{1}{2}-b}\bigg(\sum_
{\stackrel{n\sim N}{m-n^2\sim M}} \langle n\rangle^{2s}\langle m-n^2
\rangle^{2b}|f_{m,n}|^2\bigg)^{1/2}\\
&\lesssim \Vert f\Vert_{s, b},
\end{align*}
since
$$
\sum_{M, N} N^{-3\epsilon /2} M^{1-2b}<\infty.
$$
This completes the proof of part (ii) of the lemma.

The inequality stated in part (iii) now follows from parts (i) and (ii) by
standard interpolation arguments.

Next, we prove (iv).
Since $b_4>\frac 12$, it suffices to consider $f$ of the form
$$
f(x, t) =\sum_n a_n e_n(x) e(n^2 t),
$$
with 
$$\sum_n n^{2s} |a_n|^2\leq 1.$$
It follows from the Cauchy-Schwarz inequality that for any $\epsilon>0$
$$
|f(x, t)|\lesssim \Big[\sum_n |a_n|^2 n^{1+\epsilon}|e_n(x)|^2\Big]^{\frac 12}
$$
and hence
$$
\Vert f\Vert_{L_x^3L^\infty_t} \lesssim \Big[\sum_n|a_n|^2 n^{1+\epsilon} \Vert
e_n\Vert_{L_x^3}^2\Big]^{\frac 12}< O(1).
$$

Inequality (v) then follows by interpolation between (ii) and (iv), while (vi) is
obtained by interpolating between (i) and (v).

We prove (vii), taking $f$ of the form
$$
f(x, t) =\sum_n a_ne_n(x) e(n^2 t)=\sum_n a_n \frac {\sin\pi nr}r \, e(n^2 t)
$$
with $r=|x|$ and $\sum_n|a_n|^2\leq 1$.

Fix $0<\rho\leq 1$ and consider values of $x$ in the annulus $\displaystyle\frac{\rho}{2}\leq r\leq\rho$.
We make two estimates.  We first note that 
\begin{align}
\nonumber\Vert f\Vert_{L^4_{|x|\sim \rho}L_t^4}&\leq \frac 1{\sqrt\rho} \Big\Vert\sum_n
a_n(\sin \pi nr) e(n^2t) \Big\Vert_{L^4_{r\leq 1} L^4_{|t|\leq 1}}\\
&\nonumber\leq\frac 1{\sqrt \rho} \bigg[ \max_{k,\ell} \Big|\Big\{(n,n')\in\mathbb Z^2;
n\pm n'=k, n^2+(n')^2 =\ell\Big\}\Big|\bigg]^{\frac 14}\\
&\lesssim \frac 1{\sqrt \rho}. 
\tag{2.22}
\end{align}

On the other hand, one has
$$
\Vert f\Vert_{L^2_{|x|\sim\rho} L_t^2} \leq \Big\Vert\sum_n a_n(\sin \pi nr)
e(n^2t)\Big\Vert_{L^2_{r\sim\rho}L^2_{|t|<1}}\lesssim \sqrt\rho.\eqno{(2.23)}
$$

Hence (vii) follows by interpolation between (2.22), (2.23) and summation over
dyadic $\rho=2^{-j}$.

Finally, (viii) is obtained by interpolation between (i) and (vii).  This completes the proof of Lemma 2.4.
\end{proof}

\section{A priori uniform bounds}

In this section, we establish $X^{s,b}$ bounds on solutions of the truncated
equation (1.1) which are uniform in the truncation parameter
$N$.  For this purpose, we will first obtain a preliminary uniform estimate
on the norms $L_x^pL_t^q$ for suitable values of $p$ and $q$.  In particular,
we have the following:
\begin{lemma}\label {Lemma5}
For every $0\leq s<1/2$, $1\leq p<\frac{6}{1+2s}$, $1\leq q<\infty$, 
there exists a constant $C>0$ such that for every $N>0$ one has the bound
\be
\mu_F^{(N)}(\{\phi:\lVert
(\sqrt{-\Delta})^su\rVert_{L_x^pL_t^q}>\lambda\})\lesssim
\exp(-c\lambda^c),
\label{(3.1)}
\ee
where $u=u_N$ is a solution to the truncated equation \eqref{(1.1)}
associated to initial data $\phi$ (truncated as $P_N\phi$).
\end{lemma}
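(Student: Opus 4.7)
The plan is to exploit the invariance of the Gibbs measure $\mu_G^{(N)}$ under the flow of \eqref{(1.1)} in order to reduce the space-time estimate on $u_N$ to a purely spatial estimate on the random initial data, then apply the Gaussian moment inequality \eqref{(2.5)} together with the eigenfunction bounds \eqref{(2.2)}, and finally transfer the resulting tail estimate from $\mu_G^{(N)}$ back to $\mu_F^{(N)}$.

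For a moment exponent $r\geq\max(p,q)$, Minkowski's inequality combined with the fact that the $\mu_G^{(N)}$-distribution of $u_N(t,\cdot)$ is independent of $t$ gives
\begin{align*}
\bigl\|\|(\sqrt{-\Delta})^s u_N\|_{L_x^p L_t^q}\bigr\|_{L^r(d\mu_G^{(N)})}\lesssim T^{1/q}\,\bigl\|\|(\sqrt{-\Delta})^s\phi\|_{L_x^p}\bigr\|_{L^r(d\mu_G^{(N)})}.
\end{align*}
Since the Gibbs density with respect to $\mu_F^{(N)}$ is pointwise bounded by $Z_N^{-1}\lesssim 1$ (uniformly in $N$, as one checks by a standard Markov argument), it suffices to estimate the same moment under $\mu_F^{(N)}$, i.e.\ for the Gaussian process $\phi_\omega=\sum_{n\leq N}\frac{g_n(\omega)}{n\pi}e_n$. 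Writing $(\sqrt{-\Delta})^s\phi_\omega=\sum_{n\leq N}\frac{g_n}{\pi n^{1-s}}e_n(x)$ and applying \eqref{(2.5)} pointwise in $x$, followed by a Minkowski step interchanging $L^r(d\omega)$ with $L_x^p$ (valid for $p\geq 2$ by taking $r\geq p$, and for $p<2$ by boundedness of $B$), yields
\begin{align*}
\bigl\|\|(\sqrt{-\Delta})^s\phi_\omega\|_{L_x^p}\bigr\|_{L^r(d\omega)}\lesssim\sqrt{r}\,\biggl(\sum_{n\leq N}\frac{\|e_n\|_{L_x^p}^2}{n^{2(1-s)}}\biggr)^{1/2}.
\end{align*}
Substituting \eqref{(2.2)} produces, in the decisive range $p>3$, the series $\sum_n n^{2s-6/p}$, whose convergence is precisely the hypothesis $p<6/(1+2s)$; the ranges $p\leq 3$ (with a harmless logarithm at $p=3$) are easier and reduce to the assumption $s<1/2$. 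The upshot is a uniform moment bound of the form $\bigl\|\|(\sqrt{-\Delta})^s u_N\|_{L_x^p L_t^q}\bigr\|_{L^r(d\mu_G^{(N)})}\lesssim\sqrt{r}$ with constant depending only on $T,p,q,s$.

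Chebyshev and optimization in $r$ (take $r\sim\lambda^2$) then yield the Gaussian tail $\mu_G^{(N)}\{\|(\sqrt{-\Delta})^s u_N\|_{L_x^p L_t^q}>\lambda\}\lesssim\exp(-c\lambda^2)$. To convert this into a bound under $\mu_F^{(N)}$ I partition according to whether $\|P_N\phi\|_{L_x^4}\leq K$, with $K=\lambda^{1/2-\delta}$ for a small $\delta>0$: on the low-norm set the Gibbs density is bounded below by $Z_N^{-1}e^{-K^4/4}$, so the $\mu_F^{(N)}$-mass of the intersection is at most $Z_N e^{K^4/4}$ times the corresponding $\mu_G^{(N)}$-mass, contributing $\exp(\lambda^{2-4\delta}/4-c\lambda^2)$; on the complementary set the $\mu_F^{(N)}$-mass is controlled via \eqref{(2.6)} applied to the degree-four chaos $\|P_N\phi_\omega\|_{L_x^4}^4$, giving a contribution of order $\exp(-cK^2)=\exp(-c\lambda^{1-2\delta})$. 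Summing yields the claimed bound of the form $\exp(-c\lambda^c)$ with $c=1-2\delta>0$.

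The main obstacle is the final $\mu_G^{(N)}\to\mu_F^{(N)}$ transfer: the Radon--Nikodym density $Z_N^{-1}e^{-\frac14\|P_N\phi\|_{L_x^4}^4}$ is not bounded below uniformly, which forces the sharp Gaussian tail $\exp(-c\lambda^2)$ produced by the preceding steps to be degraded to a weaker $\exp(-c\lambda^c)$ with some $c<2$. This is precisely why the exponent in \eqref{(3.1)} is left unspecified; the resulting tail is still comfortably strong enough for the Borel--Cantelli arguments used in the convergence proof downstream.
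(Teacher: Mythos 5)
Your proof is correct and follows essentially the same route as the paper: Gibbs-measure invariance reduces the space-time bound to a fixed-time Gaussian moment computation via (2.5) and the eigenfunction estimates (2.2) (where the hypothesis $p<6/(1+2s)$ enters), Chebyshev plus optimization in the moment exponent gives the $\mu_G^{(N)}$ tail, and the transfer back to $\mu_F^{(N)}$ is done by splitting on the level set of $\lVert P_N\phi\rVert_{L_x^4}$ exactly as in the paper, with your explicit choice $K=\lambda^{1/2-\delta}$ making precise the paper's implicit optimization in $\lambda_1$. The only cosmetic difference is that you control the $L^4$-tail of $\phi$ under the free measure by the chaos estimate (2.6) applied to $\lVert P_N\phi\rVert_{L_x^4}^4$ rather than by Minkowski plus (2.5); both yield $\exp(-cK^2)$, and your remark that the density argument degrades the Gaussian tail to a stretched exponential is consistent with the statement of the lemma.
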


\begin{proof}
Without loss of generality we may assume $p>3$.  It suffices to show 
that \eqref{(3.1)} holds with $\mu_F^{(N)}$ replaced by the Gibbs 
measure $\mu_G$.  Indeed, suppose that one has
\be\label{(3.2)}
\mu_G(A_\lambda)\leq C\exp(-c\lambda^c) 
\ee
with
\begin{align*}
\nonumber A_\lambda:=\{\phi:\lVert
(\sqrt{-\Delta})^su\rVert_{L_x^pL_t^q}>\lambda\},\quad \lambda>0.
\end{align*}
Then, fixing $\lambda_1>0$, we have
\begin{align}
\mu_F^{(N)}(A_\lambda)&=\mu_F^{(N)}(A_\lambda \cap
\{\phi:\lVert\phi\rVert_{L_x^4}>\lambda_1\})+\mu_F^{(N)}(A_\lambda \cap
\{\phi:\lVert\phi\rVert_{L_x^4}\leq\lambda_1\})\nonumber\\
&\lesssim \mu_F^{(N)}(\{\phi:\lVert\phi\rVert_{L_x^4}>\lambda_1\})+\exp\bigg(
\frac{1}{4}\lambda_1^4\bigg)\mu_G(A_\lambda)\nonumber \\
&\lesssim \mu_F^{(N)}(\{\phi:\lVert\phi\rVert_{L_x^4}>\lambda_1\})+
\exp\bigg(\frac{1}{4}\lambda_1^4\bigg)\exp(-c\lambda^c).\label{(3.3)}
\end{align} 
To estimate the first term in \eqref{(3.3)}, we fix $q_1\geq 4$ and
appeal to the Tchebyshev and Minkowski inequalities followed by the estimate
\eqref{(2.5)} on sums of Gaussian random variables.  This gives
\begin{align}
\nonumber \mu_F^{(N)}(\{\phi:\lVert \phi\rVert_{L_x^4}>\lambda_1\})&\lesssim
\frac{1}{\lambda_1^{q_1}}\Big[\mathbb{E}_{\mu_F^{(N)}}\lVert
\phi\rVert_{L_x^4}^{q_1}\Big]\\
\nonumber &\leq \frac{1}{\lambda_1^{q_1}}\Big\lVert
\bigg(\mathbb{E}_{\mu_F^{(N)}}\Big[\Big(\sum_{n}
\frac{g_n(\omega)}{n}e_n(x)\Big)^{q_1}\Big]\bigg)^{1/q_1}\Big\rVert_{L_x^4}^{q_1}\\
\nonumber &\lesssim \Big(\frac{\sqrt{q_1}}{\lambda_1}\Big)^{q_1}\Big\lVert
\Big(\sum_n \frac{|e_n(x)|^2}{n^2}\Big)^{1/2}\Big\rVert_{L_x^4}^{q_1}\\
&\lesssim
\Big(\frac{\sqrt{q_1}}{\lambda_1}\Big)^{q_1}\Big(\sum_n \frac{\lVert
e_n\rVert_{L_x^4}^2}{n^2}\Big)^{q_1/2}.\label{(3.4)}
\end{align}
where in obtaining the last inequality we have used the Minkowski inequality.

Invoking now the eigenfunction estimate \eqref{(2.2)},
\begin{align*}
\eqref{(3.4)}&\lesssim
\Big(\frac{\sqrt{q_1}}{\lambda_1}\Big)^{q_1}\Big(\sum_n
n^{-3/2}\Big)^{q_1/2}\lesssim \Big(\frac{\sqrt{q_1}}{\lambda_1}\Big)^{q_1}
\end{align*}

We therefore obtain
\begin{align*}
\mu_F^{(N)}(A_\lambda)&\lesssim
\Big(\frac{\sqrt{q_1}}{\lambda_1}\Big)^{q_1}+\exp(\frac{1}{4}\lambda_1^4)\mu_G(A),
\end{align*}
so that optimizing in the choice of $q_1$ gives
\begin{align*}
\mu_F^{(N)}(A_\lambda)&\lesssim \exp(-c\lambda_1^c)
\end{align*}
as desired.

It therefore suffices to show \eqref{(3.2)}, which we recall was the
desired inequality with the measure $\mu_F^{(N)}$ replaced by the (invariant)
Gibbs measure $\mu_G=\mu_G^{(N)}$.  We argue as above: fixing $q_2\geq
\max\{p,q\}$ and invoking the Tchebychev and Minkowski inequalities, one has
\begin{align}
\nonumber\mu_G(A_\lambda)&\leq \lambda^{-q_2}\mathbb{E}_{\mu_G}\big[\lVert
(\sqrt{-\Delta})^s u\rVert_{L_{x}^pL_t^q}^{q_2}\big]\\
&\lesssim \lambda^{-q_2}\Big\lVert \Big(\mathbb{E}_{\mu_G}
[((\sqrt{-\Delta})^s u)^{q_2}]\Big)^{1/q_2}\Big\rVert_{L_x^pL_t^q}^{q_2}
\label{(3.5)}
\end{align}

Now, using the invariance of the Gibbs measure $\mu_G=\mu_G^{(N)}$ with
respect to the truncated evolution (with $u=u_N$ being a solution of the
truncated equation) followed by the estimate for sums of Gaussian random
variables given by \eqref{(2.5)},
\begin{align*}
\eqref{(3.5)}&\lesssim\lambda^{-q_2}\bigg\lVert
\bigg(\mathbb{E}_{\mu_G} \Big[\Big(\sum_n
\frac{g_n(\omega)}{n^{1-s}}e_n\Big)^{q_2}\Big]\bigg)^{1/q_2}\bigg\rVert_{L_x^pL_t^q}^{q_2}\\
&\lesssim
\Big(\frac{\sqrt{q_2}}{\lambda}\Big)^{q_2}\bigg\lVert \sum_n
\frac{|e_n(x)|^2}{n^{2(1-s)}}\bigg\rVert_{L_x^{p/2}}^{q_2/2}.
\end{align*}
To conclude, we use the eigenfunction estimate \eqref{(2.2)} together
with the condition $p<\frac{6}{1+2s}$ to get the bound
\begin{align*}
\bigg\lVert \sum_n
\frac{|e_n(x)|^2}{n^{2(1-s)}}\bigg\rVert_{L_x^{p/2}}^{q_2/2}&\lesssim
\bigg(\sum_n \frac{1}{n^{2(1-s)}}\lVert
e_n(x)\rVert_{L_x^p}^2\bigg)^{q_2/2}\\
\nonumber &\lesssim \bigg(\sum_n n^{2(s-\frac{3}{p})}\bigg)^{q_2/2}\\
&\lesssim 1.
\end{align*}

Hence
\be
\label{(3.6)}\mu_G(A_\lambda)\lesssim
\Big(\frac{\sqrt{q_2}}{\lambda}\Big)^{q_2}
\ee
Optimizing the choice of $q_2$ in \eqref{(3.6)} as for $q_1$ above gives
\begin{align*}
\mu_F^{(N)}(\{\phi:\lVert
(\sqrt{-\Delta})^su\rVert_{L_x^pL_t^q}>\lambda\})&\lesssim \exp(-c\lambda^2)
\end{align*}
as desired.
\end{proof}

We are now ready to establish uniform $X^{s,b}$-bounds.

\begin{proposition}
\label{Proposition6}
Fix $0\leq s<\frac{1}{2}$ and $\frac{1}{2}<b<\frac{3}{4}$.  Then there exists
$C>0$ such that for all $N>0$, if $u=u_N$ is a solution to the truncated
equation \eqref{(1.1)}, then 
\begin{align*}
\mu_F^{(N)}\bigg(\bigg\{\phi:\lVert u\rVert_{s,b}>
\lambda\bigg\}\bigg)\lesssim \exp(-c_1\lambda^{c_2}).
\end{align*}
\end{proposition}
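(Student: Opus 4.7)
The plan is to parallel the proof of Lemma \ref{Lemma5}. First I would carry out the same two-stage reduction from $\mu_F^{(N)}$ to the Gibbs measure $\mu_G^{(N)}$: splitting $A_\lambda = \{\phi : \lVert u\rVert_{s,b} > \lambda\}$ according to whether $\lVert\phi\rVert_{L_x^4}$ exceeds a threshold $\lambda_1$, using the Radon-Nikodym density $\exp(-\tfrac14 \lVert P_N\phi\rVert_{L_x^4}^4)$ on the complement, invoking the Gaussian-tail bound on $\lVert\phi\rVert_{L_x^4}$ already derived in Lemma \ref{Lemma5}, and optimizing in $\lambda_1$. This reduces the problem to showing $\mu_G^{(N)}(A_\lambda) \lesssim \exp(-c\lambda^c)$.

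For the Gibbs estimate I would invoke Duhamel, $u = u_{\mathrm{lin}} + u_{\mathrm{NL}}$ with $u_{\mathrm{lin}}(t) = e^{it\Delta}P_N\phi$, localized to the time interval $I$ by a cutoff $\eta(t/T)$. Standard Bourgain-space theory gives
\[
\lVert u_{\mathrm{lin}}\rVert_{s,b;I} \lesssim \lVert P_N\phi\rVert_{H_x^s}, \qquad \lVert u_{\mathrm{NL}}\rVert_{s,b;I} \lesssim \lVert P_N(|u|^2 u)\rVert_{s,b-1;I}.
\]
The linear piece is immediate from invariance: under $\mu_G^{(N)}$ the law of $P_N\phi$ is absolutely continuous with respect to the truncated Gaussian series $\sum_{n\leq N}(g_n/n\pi)e_n$, and \eqref{(2.5)} together with $\sum_n n^{2(s-1)} < \infty$ (which holds since $s < 1/2$) yields $\mathbb{E}_{\mu_G^{(N)}}[\lVert P_N\phi\rVert_{H_x^s}^q] \lesssim q^{q/2}$, delivering Gaussian tails via Tchebyshev exactly as in Lemma \ref{Lemma5}.

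For the nonlinear piece, since $b - 1 \in (-\tfrac12, -\tfrac14)$, I would analyze $\lVert P_N(|u|^2u)\rVert_{s,b-1}$ by a duality argument (in the spirit of Lemma \ref{Lemma2}): it equals the supremum of $|\langle P_N(|u|^2 u), g\rangle|$ over unit vectors $g$ in $X^{-s,1-b}$. Each such $g$ embeds, by Lemma \ref{Lemma4}, into suitable $L_x^{p'}L_t^{q'}$ spaces, so after distributing $\langle\nabla\rangle^s$ among the three copies of $u$ via a fractional Leibniz rule and applying H\"older, the problem reduces to bounding three factors of $u$ (one carrying $\langle\nabla\rangle^s$) in compatible $L_x^p L_t^q$ scales with $p < 6/(1+2s)$. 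Lemma \ref{Lemma5}, combined with $\mu_G$-invariance slice by slice and Minkowski, furnishes Gaussian-type tails for each factor, and the multilinear estimate \eqref{(2.6)} permits combining them. Raising to the power $q$ and optimizing via Tchebyshev yields $\mu_G^{(N)}(A_\lambda) \lesssim \exp(-c\lambda^c)$.

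The main obstacle I anticipate is the trilinear bound $\lVert P_N(|u|^2 u)\rVert_{s,b-1}$ uniformly in $N$: the regime $s < 1/2$ lies precisely at the threshold where Lemma \ref{Lemma5} applies, so the $(p,q)$ exponents on the three $u$-factors and the dual $(p',q')$ on the test function $g$ must be chosen so that Lemma \ref{Lemma4} and Lemma \ref{Lemma5} close tightly against each other. A secondary delicacy is the degenerating cutoff loss at $b > \tfrac12$; this can be circumvented either by shrinking $I$ and absorbing any $T^{-\alpha}$ into the final constant, or by passing through the intermediate norm $\ltriple\,\cdot\,\rtriple$ introduced in Section 2, which was designed precisely to avoid this loss.
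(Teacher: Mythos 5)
Your proposal is correct and follows essentially the same route as the paper: Duhamel splitting, Gaussian tail bounds for the linear flow, and for the integral term the standard $X^{s,b-1}$ energy estimate dualized against the $X^{0,1-b}$ unit ball, with Lemma \ref{Lemma4}(i) (valid because $1-b>\tfrac14$, i.e. $b<\tfrac34$) applied to the dual function and Lemma \ref{Lemma5} (with $p<\tfrac{6}{1+2s}$, i.e. $s<\tfrac12$) supplying the a priori $L_x^pL_t^q$ tails for the three $u$-factors, followed by Tchebyshev and optimization in $q$. The exponent bookkeeping you flag as the main obstacle does close, exactly as in the paper's choice $\lVert v\rVert_{L_x^{3-\epsilon}L_t^2}\lVert(\sqrt{-\Delta})^{s}u\rVert_{L_x^{(3-\epsilon)/(1-\epsilon)}L_t^6}\lVert u\rVert_{L_x^{6-2\epsilon}L_t^6}^2$.
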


\begin{proof}
Let $s\in [0,\frac{1}{2})$ and $b\in (\frac{1}{2},\frac{3}{4})$ be given.
Fix $N\geq 1$ and write the Duhamel formula
\be
u(t)=e^{it\Delta}\phi+\int_0^t
e^{i(t-\tau)\Delta}|u|^2u(\tau)d\tau.\label{(3.7)}
\ee

We estimate both the linear and nonlinear terms in \eqref{(3.7)}
individually.  We begin with the linear term. Let $T_{s,b}$ be the operator
defined in \eqref{(2.9)}.  Then, fixing $q\geq 2$ and invoking the
Tchebychev and Minkowski inequalities, one has
\begin{align*}
\nonumber \mu_F^{(N)}(\{\phi:\lVert
e^{it\Delta}\phi\rVert_{s,b}>\lambda\})&\leq
\lambda^{-q}\mathbb{E}_\omega\Big[\lVert
T_{s,b}e^{it\Delta}\phi\rVert_{L_{t,x}^2}^q\Big]\\
\nonumber &\lesssim \lambda^{-q}\lVert \mathbb{E}_\omega
[(T_{s,b}e^{it\Delta}\phi)^{q}]^{1/q}\rVert_{L_{t,x}^2}^q\\
\nonumber &\lesssim \lambda^{-q}q^{q/2}\bigg\lVert \sum_n
\frac{|e_n(x)|^2}{n^{2(1-s)}}\bigg\rVert_{L_{t,x}^1}^{q/2}\\
&\lesssim \lambda^{-q}q^{q/2}.
\end{align*}
Appropriate choice of $\lambda$ gives
\be\label{(3.8)}
\mu_F^{(N)}(\{\phi:\lVert e^{it\Delta}\phi\rVert_{s,b}>\lambda\})\lesssim
\exp(-c\lambda^2).
\ee

Turning to the integral term, we set $f=|u(\tau)|^2u(\tau)$ and observe that
the expansion
$\displaystyle f(x,\tau)=\sum_{m,n} f_{n,m}e_n(x)e(m\tau)$ leads to
\begin{align*}
\int_0^t e^{i(t-\tau)\Delta}f(\tau)d\tau&=\int_0^t
\bigg(\sum_{m,n}f_{n,m}e_n(x)e((t-\tau)n^2+m\tau)\bigg)d\tau\\
&=\sum_{m,n} \frac{if_{n,m}}{(n^2-m)}e_n(x)(e(tn^2)-e(tm)).
\end{align*}
Applying H\"older's inequality and recalling $b>\frac{1}{2}$, we obtain
\begin{align*}
\bigg\lVert \int_0^t
e^{i(t-\tau)\Delta}f(\tau)d\tau\bigg\rVert_{s,b}&\lesssim
\bigg(\sum_{n,m} \frac{\langle n\rangle^{2s}|f_{n,m}|^2}{\langle
n^2-m\rangle^{2(1-b)}}\bigg)^{1/2}\\
&=\sup_{\stackrel{v\in X^{0,1-b}}{\lVert v\rVert_{0,1-b}\leq 1}}\,\,
\bigg|\int_0^1\int_{B} v(t,x)(\sqrt{-\Delta})^{s}f(t,x)dxdt\bigg|\\
&\lesssim \sup_{\stackrel{v\in X^{0,1-b}}{\lVert v\rVert_{0,1-b}\leq 1}}
\,\,\lVert v\rVert_{L_x^{3-\epsilon}L_t^2}\lVert
(\sqrt{-\Delta})^{s}u\rVert_{L_x^{\frac{3-\epsilon}{1-\epsilon}}L_t^6} \lVert
u\rVert_{L_x^{6-2\epsilon}L_t^6}^2
\end{align*}
Now, invoking Lemma $\ref{Lemma4}$ (i) in the form
$$
\lVert v\rVert_{L_x^{3-\epsilon}L_t^2}\lesssim \lVert v\rVert_{0,1-b},
$$
and using Lemma $\ref{Lemma5}$ to estimate the norms of $u$,
\begin{align*}
\bigg\lVert \int_0^t
e^{i(t-\tau)\Delta}f(\tau)d\tau\bigg\rVert_{s,b}&\lesssim \lambda^3
\end{align*}
for each $\lambda>0$ and all $\omega\in \Omega$ outside a set of measure
$O(\exp(-c\lambda^c))$.

We therefore have (adjusting the value of the constant $c$ as well as the
implicit constant)
\be\label{(3.9)}
\mu_F^{(N)}\bigg(\bigg\{\phi:\bigg\lVert \int_0^t
e^{i(t-\tau)\Delta}fd\tau\bigg\rVert_{s,b}>\lambda\bigg\}\bigg)\lesssim
\exp(-c\lambda^c).
\ee

To conclude, collecting \eqref{(3.8)} and \eqref{(3.9)},
\be\label{(3.10)}
\mu_F^{(N)}\bigg(\{\phi:\lVert u\rVert_{s,b}> \lambda\}\bigg)
\lesssim \exp(-c\lambda^c)
\ee
which gives the desired inequality.
\end{proof}

\section{The nonlinear term}

The main issue is an estimate on the $\ltriple\,\cdot\,\rtriple$-norm of 
trilinear expressions of the form
\be\label{(4.1)}
\int_0^t e^{i(t-\tau)\Delta} P_N[P_{N_1} U^1\, \overline{P_{N_2}u^{(2)}} \, P_{N_3}
u^{(3)}](\tau) d\tau
\ee
with $t<T$, where $U^1$ belongs to $X_{\ltriple\,\cdot\,\rtriple}$ and 
$u^{(2)}, u^{(3)}: \mathbb R\times B\to \mathbb C$ are solutions to truncated equations 
\eqref{(1.1)} for possibly different truncations $N^{(2)} \geq N_2, N^{(3)}\geq N_3$
and initial data
$$
u^{(i)} \big|_{t=0} =P_{N^{(i)}} (\phi),\quad i=2, 3.
$$

In order to establish a contractive estimate on {(4.1)}, $T$ will have to be chosen
sufficiently small; more specifically, we shall require
\be\label{(4.2)}
T\sim \frac 1{\log N_*} \text { with } N_*=\max (N_1, N_2, N_3).
\ee

As will be clear later on, this choice of $T$ is essential in our argument due to the presence of
a certain logarithmic divergence.

Our analysis is based on $L_x^pL_t^q$ norms as well as the norms $\Vert \cdot \Vert_{s, b}$ and $\ltriple\,\cdot\,\rtriple$.  Various 
contributions are considered, requiring different arguments.  While the norms
 $\Vert \cdot \Vert_{s, b}$ and $\ltriple\,\cdot\,\rtriple$ allow in particular for Fourier
 restrictions of the form $\chi_{[ (n^2 -m)\lesssim K]}$, these operations are in general not
allowed for $ L_x^pL_t^q$ norms.  For this reason, certain care is required in organizing the 
argument.

We denote by $N, N_i$, $i=1,2,3$ integers of the form $2^j$ and $n\sim N_i$ means $N_i \leq n< 2N_i$.
Denote $u_2=P_{N_2} u^{(2)}$ and $u_3=P_{N_3}u^{(3)}$.

We start by applying Lemma \ref{Lemma2} and estimate $\ltriple \eqref{(4.1)}\rtriple$ by
\be
\int_0^1\int_B \bar v (P_{N_1} U^1)\bar u_2 u_3 dxdt
\label{(4.3)}
\ee
with
$$
\ltriple v\rtriple \leq 1.
$$
By Cauchy-Schwarz,
\be\label{(4.4)}
\eqref{(4.3)} \leq \Big[\iint |v|^2 |u_2|^2 dxdt\Big]^{\frac 12} \Big[\iint |P_{N_1} U^1|^2 |u_3|^2
dxdt\Big]^{\frac 12}.
\ee

In each factor on the right-hand side of \eqref{(4.4)}, $u^{(2)}$ and $u^{(3)}$ are obtained from the 
same truncated equation.  This is essential for our analysis.

We have therefore reduced the estimate of \eqref{(4.3)} to estimating
\be
\iint \overline{P_{N}v} P_{N_1} v_1 \, \overline{P_{N_2}u} \, P_{N_3}u\label{(4.5)}
\ee
with $u$ obtained from some truncated equation \eqref{(1.1)} and
$$
\ltriple v\rtriple\leq 1, \quad\quad \ltriple v_1\rtriple\leq 1.
$$

Write \eqref{(4.5)} as 
\be
\sum_{\substack {n\leq N, n_i\leq N_i\\ m-m_1+m_2-m_3=0}} 
\overline{\hat v (n, m)} \ v_1(n_1, m_1) \, \overline{\hat u(n_2, m_2)}\, \hat u(n_3, m_3) \,
c(n, \bar n)\label{(4.6)}
\ee
with
$$
c(n,\bar n)=c(n, n_1 , n_2, n_3).
$$
Subdividing $[0, N_i]$ into dyadic intervals $[N_i', 2N_i']$, we estimate
\be
\eqref{(4.6)}\leq \sum_{N_2', N_3'}\Bigg| \sum_{\substack
{n\leq N, n_1 \leq N_1,  n_2\sim N_2', n_3\sim
N_3'\\ m-m_1+m_2-m_3=0}} c(n,\bar n)A_{n,m,\bar n,\bar m}\Bigg|\label{(4.7)}
\ee
with
\begin{equation*}
A_{n,m,\bar n,\bar m}=\overline{\hat v (n, m)} \ \hat v_1(n_1, m_1) \, \overline{\hat u(n_2, m_2)}\, \hat u(n_3, m_3).
\end{equation*}

Fix $N_2', N_3'$ and assume $N_2'\geq N_3'$. Set
$$
K=(N_2')^{10^{-3}}
$$
and define
\be
c_K (n, \bar n)= \begin{cases} c(n, \bar n),&\text { if }\quad |n^2-n_1^2 +
n_2^2 - n_3^2|< 10K\\
0,&\text { otherwise.}\end{cases}\label{(4.8)}
\ee

We now estimate
\begin{align}
&\Bigg|\sum_{\substack{ n\leq N, n_1\leq N_1, n_2\sim N_2', n_3\sim N_3'\\
m-m_1+m_2 -m_3 =0}}c(n,\bar n)A_{n,m,\bar n,\bar m} \Bigg|\leq\nonumber\\
&\hspace{0.8in}\sum_{N', N_1'}\Bigg| \sum_{\substack{n\sim N', n_1\sim N_1', n_2\sim N_2', 
n_3\sim N_3'\\ 
m-m_1 +m_2-m_3 =0, |m-n^2|\geq K}}c(n,\bar n)A_{n,m,\bar n,\bar m}\Bigg|\label{(4.9)}\\
&\hspace{0.65in}+\sum_{N', N_1'}\Bigg|\sum_{\substack{n\sim N', n_1\sim N_1', n_2\sim N_2', n_3\sim N_3'\\
 m-m_1 +m_2-m_3 =0, |m-n^2|<K, \\|m_1-n_1^2|\geq K}}c(n,\bar n)A_{n,m,\bar n,\bar m}\Bigg|\label{(4.10)}\\
&\hspace{0.65in}+\Bigg|\sum_{\substack{n\leq N, n_1\leq N_1, n_2\sim N_2', n_3\sim N_3'\\
m-m_1+m_2 -m_3=0, |m-n^2|<K, |m_1-n_1^2|<K}} c(n,\bar n)A_{n,m,\bar n,\bar m}\Bigg|.  \label{(4.11)}
\end{align}

Making a further decomposition according to which $|n^2-n_1^2+n^2_2 - n^2_3|>10K$ or
$|n^2-n^2_1+n^2_2-n_3^2|\lesssim 10K$ in \eqref{(4.11)}, the contribution of 
\eqref{(4.11)} may be evaluated by bounding
\begin{align}
&\sum_{N', N_1'}\Bigg|\sum_{\substack{
n\sim N', n_i\sim N_i', |n^2-n_1^2 + n^2_2 -n_3^2|>10K\\
m-m_1+m_2-m_3=0, |m-n^2|<K, |m_1-n_1^2|<K}}c(n,\bar n)A_{n,m,\bar n,\bar m}\Bigg|\label{(4.12)}\\
&\hspace{0.4in}+\Bigg|\sum_{\substack{n\leq N, n_1\leq N_1, n_2\sim N_2', n_3\sim N_3'\\ m-m_1+ m_2-m_3=0}}
c_K(n, \bar n)\, A_{n,m,\bar n,\bar m}\Bigg|\label{(4.13)}
\end{align}
where in \eqref{(4.13)} we replaced $\hat v$ by $\hat v \chi_{|m-n^2|\leq K}$  and  $\hat v_1$  by 
$\hat v_1 \chi_{|m_1-n_1^2|<K}$ (noting that the norm $\ltriple\,\cdot\,\rtriple$ 
is unconditional).

Note that if $n_2=n_3$ and $|n^2 -n_1^2+n_2^2 -n_3^2|\leq 10K$, then either $n=n_1$ or 
$N'+N_1'\lesssim (N_2')^{10^{-3}}$.  Hence, \eqref{(4.13)} is bounded by
\begin{align}
&\hspace{0.50in}\sum_{N', N_1'}\hspace{0.28in} \Bigg|\sum_{\substack{n\sim N', n_i\sim N_i', n_2\not= n_3\\ m-m_1+m_2 -m_3 =0}} c_K(n, \bar n)A_{n,m,\bar n,\bar m}\Bigg|\label{(4.14)}\\
&\hspace{0.10in}+\sum_{N', N_1' \lesssim (N_2')^{10^{-3}}} \Bigg|
\sum_{\substack{ n\sim N',  n_1\sim N_1', n_2\sim N_2'\\ m-m_1+ m_2-m_3 =0}} 
c_K (n, n_1, n_2, n_2)A_{n,m,(n_1,n_2,n_2),\bar m}\Bigg|\label{(4.15)}\\
&\hspace{0.10in}+\Bigg| \sum_{\substack{ n\leq N, n_2 \sim N_2'\\ m-m_1+m_2-m_3=0}} c(n, n, 
n_2, n_2) A_{n,m,(n,n_2,n_2),\bar m}\Bigg| \label{(4.16)}
\end{align}

Let $$\sigma_{n, N_2'} =\sum_{n_2\sim N_2'} \frac 1{n^2_2} c(n, n, n_2, n_2)= O(1)$$ and estimate \eqref{(4.16)} by
\begin{align}
&\sum_{N'}\Bigg|\sum_{n\sim N'}\int_0^1 \overline{\hat v(n)(\tau)} \hat v_1 (n)(\tau) 
\Bigg[ \sum_{n_2\sim N_2'} c(n, n, n_2, n_2)|\hat u(n) (\tau)|^2-\sigma_{n, N_2'}\Bigg] d\tau\Bigg|\label {(4.17)}\\
&\hspace{1.2in}+\Bigg|\sum_{n\leq N} \Big(\int^1_0 \overline{\hat v(n)(\tau)} \, 
\hat v_1 (n)(\tau) d\tau\Big)\sigma_{n, N_2'}\Bigg|.\label{(4.18)}
\end{align}

In view of the above observations, our estimate of $\ltriple \eqref{(4.1)}\rtriple$ reduces 
to establishing bounds on \eqref{(4.9)}, \eqref{(4.10)}, \eqref{(4.12)}, \eqref{(4.14)}, \eqref{(4.15)}, 
\eqref{(4.17)} and \eqref{(4.18)}; this will be the topic of the following two sections.

The choice of $T$ is dictated by \eqref{(4.18)}, and we treat this term first.  Indeed, taking $T$ sufficiently small, Lemma \ref{Lemma3} gives
\begin{align}
\nonumber &\Bigg|\sum_{n\leq N, m}\, \overline{\hat v(n, m)}\hat v_1(n, m)
 \sigma_{n, N_2'}\Bigg|\\
&\hspace{0.6in}\lesssim \sum_{n\leq N, m} |\hat v(n, m) | \, |\hat v_1(n, m)|
\lesssim T= o\Big(\frac 1{\log N_*}\Big).\label{(4.19)}
\end{align}
Evaluating the summation over dyadic $N_2'\leq N_*$ then allows us to conclude that the contribution of \eqref{(4.18)} can be estimated by  $o(1)$.

\bigskip

\section
{Multilinear estimates (I)}

In this section, we obtain bounds on the terms \eqref{(4.9)}, \eqref{(4.10)} and \eqref{(4.12)}.  The remaining 
terms will be treated in the next section.  

We begin with the contribution of \eqref{(4.9)}.  Fix the values $N', N_1'$ and rewrite the inner sum in 
\eqref{(4.9)} as
\be \label{(5.1)} 
\int_B\int_0^1 \, \overline{(P_{n\sim N'} v)} (P_{n_1\sim N_1'} v_1) 
\,  \overline {(P_{n_2\sim N_2'} u)} (P_{n_3\sim N_3'} u)dtdx,
\ee
where $\hat v(n, m)=0$ for $|m-n^2|<K$.

It follows from the definition of the $\ltriple\,\cdot\,\rtriple$ norm that 
$$
\Vert P_{n\sim N'} v\Vert_{0, \frac 13} < K^{-\frac 17} \ltriple P_{n\sim N'} v\rtriple.
$$
Moreover, by (viii) from Lemma \ref{Lemma4}, applied with $b=\frac 14+\frac{3\epsilon}{2}$, $\epsilon=10^{-6}$, we 
therefore obtain
\be\label{(5.2)}
\Vert P_{n\sim N'} v\Vert_{L_x^{\frac 3{1+\epsilon}}L_t^{\frac 2{1-\epsilon}}} < K^{-\frac 17} 
\ltriple P_{n\sim N'}v\rtriple.
\ee

Also by (viii) of Lemma \ref{Lemma4}
\be
\label{(5.3)}
\Vert P_{n_1\sim N_1'}v_1\Vert_{L_x^{\frac 3{1+\epsilon}}L_t^{\frac 2
{1-\epsilon}}} \lesssim \ltriple P_{n\sim N_1'} v_1\rtriple.
\ee

To estimate the contributions of $u_2$ and $u_3$ to \eqref{(5.1)}, we use the apriori bound given by 
Lemma \ref{Lemma5} with $q=\frac{2}{\epsilon}$, where $\epsilon=10^{-6}$ as before.  In particular, we 
may ensure that
\be\label{(5.4)}
\max\{\Vert P_{n_2\sim N_2'} u\Vert_{L^{6-\epsilon}_xL^q_t} ,\,\, \Vert P_{n_3\sim N_3'} u \Vert_{L^{6-\epsilon}_xL^q_t}\}<(N_2')^{10^{-6}}
\ee
outside an exceptional set of measure at most $\exp(-c(N_2')^{c10^{-6}})$
in the initial datum $\phi$.  Taking $p=\frac {6}{1-2\epsilon}$, we then obtain
\be\label{(5.5)}
\Vert P_{n_2\sim N_2'} u\Vert_{L_x^pL_t^q} \lesssim (N_2') ^{10^{-6}+\frac{3}{6-\epsilon}-\frac{1}{2}+\epsilon}.
\ee

Hence, from \eqref{(5.2)}-\eqref{(5.5)} and recalling that $K=(N_2')^{10^{-3}}$ and $\epsilon=10^{-6}$, it follows that
\begin{align}
\eqref{(5.1)}&<K^{-\frac 17}(N_2')^{10^{-6}+\frac{3}{6-\epsilon}-\frac{1}{2}+\epsilon}\ltriple P_{n\sim N'}v\rtriple \ 
\ltriple P_{n\sim N_1'} v_1\rtriple\nonumber\\
&<(N_2')^{-\frac 12 10^{-4}}\ltriple P_{n\sim N'} v\rtriple \ 
\ltriple P_{n\sim N_1'} v_1\rtriple.\label{(5.6)}
\end{align}

To complete the estimate of the contribution of \eqref{(4.9)}, it remains to perform dyadic summation 
over $N', N_1', N_2'$ and $N_3'$, with $N_2'\geq N_3'$.  Note that from the definition of the $\ltriple\,\cdot\,\rtriple$ 
norm, one has
\be\label{(5.7)}
\ltriple v\rtriple^2 \sim \sum_{N'} \ltriple P_{n\sim N'} v\rtriple^2.
\ee

In view of \eqref{(5.6)}, there is of course no problem with the summation over values of $N_2'$ and 
$N_3'$, and we may also assume $\max \{N', N_1'\}>\exp((N_2')^{10^{-5}})$.  Consider the case $N' \geq N_1'$.  
If $N'\sim N_1'$, the estimate follows by using Cauchy-Schwarz and \eqref{(5.7)} for $v$ and $v_1$.  Assume 
now that $N'> 4N_1'$ holds.  We estimate the contribution of such terms to \eqref{(4.9)} by
\be
\gamma\int \Big[\sum_{n\sim N'} |\hat v(n)(t)|\Big] \ \Big[\sum_{n_1\sim N_1'} |\hat v_1 (n_1)(t)|\Big]\
\Big[{\sum_{n_2\sim N_2'} |\hat u(n_2)(t)|\Big] \ \Big[ \sum_{n_3\sim N_3'}|\hat u(n_3)(t)}|\Big] dt \label{(5.8)}
\ee
with
$$
\gamma =\max_{n\sim N', n_i\sim N_i'} |c(n, n_1, n_2, n_3)|.
$$
We then have the bound
\begin{align}
\eqref{(5.8)} &\leq \gamma\cdot (N'N_1'N_2'N_3')^{\frac 12} \Vert v\Vert_{L^2_t L_x^2}\Vert v_1\Vert_{L^2_tL^2_x} 
\Vert u\Vert^2_{L_t^\infty L_x^2}\nonumber\\
&\leq \gamma\cdot(N'N_1'N_2'N_3')^{\frac 12} \ltriple v\rtriple \ 
\ltriple v_1\rtriple \ \Vert u\Vert^2_{L^\infty_tL_x^2}.\label{(5.9)}
\end{align}

To evaluate $\gamma$, we write
\be
\label{(5.10)}
\int_B e_n e_{n_1} e_{n_2} e_{n_3}dx=\int^1_0 \sin(n\pi r)\sin (n_1\pi r)\vp (r) dr
\ee
with $\vp(r) =\frac {\sin(\pi n_2 r)} r\cdot \frac {\sin(\pi n_3 r)} r$, and note that integration by parts gives
\begin{align*}
\int_0^1 \cos((n\pm n_1) \pi r) \, \vp(r) dr
&=-\frac 1{\pi(n\pm n_1)} \int_0^1 \vp' (r) \sin((n\pm n_1) \pi r)dr\\
&< O\Big(\frac {\Vert\vp'\Vert_{L^\infty}}{(n\pm n_1)^2}\Big) \\
&< O\Big (\frac {(N_2')^2 N_3'}{(N')^2}\Big),
\end{align*}
where the last line follows from $N'>4N_1'$. Hence
$$
\eqref{(5.9)} \lesssim \frac {(N_2')^4}{N'}.
$$

Summing \eqref{(5.9)} 
over dyadic $N',N'_1,N'_2$ and $N'_3$ satisfying $N'>\max\{\exp((N'_2)^{10^{-5}}),$ $4N'_1\}$ and $N'_3\leq N'_2$, the 
contribution of \eqref{(5.8)} is then bounded by
\begin{align*}
\sum_{N',N'_2} \frac{(N'_2)^{4}(\log N'_2)(\log N')}{N'}
&<\frac{1}{N'_2},
\end{align*}
which completes the estimate of the contribution of \eqref{(4.9)}.

Since $v$ and $v_1$ play the same role, the 
same argument also takes care of contribution of \eqref{(4.10)}.

We now address the contribution of \eqref{(4.12)}.  Since the estimate relies only on $X_{s, b}$ norms, 
Fourier restrictions are not an issue.  Note that since $|m-n^2|<K$, $|m_1-n_1^2|<K$ and 
$|n^2- n_1^2+n_2^2 -n_3^2|> 10 K$, at least one of the conditions
$$|m_2-n_2^2|>K\quad\textrm{or}\quad |m_3-n_3^2|>K$$ holds.  

Assume 
\be\label{(5.11)}
|m_2-n_2^2|\gtrsim |m_3-n_3^2|>K.
\ee
We distinguish several cases.

\noindent
\underline{\it Case 1}: $N'+N_1' <(N_2')^3$.

Consider the expression
\be\label{(5.12)}
\sum_{\substack {n\sim N', n_i\sim N_i', |n^2-n_1^2 +n_2^2 -n_3^2|\gtrsim K\\ 
m-m_1+m_2-m_3 =0\\ |m_2-n_2^2|\gtrsim |m_3-n_3^2|>K}}
c(n, \bar n)A_{n,m,\bar n,\bar m}
\ee
where we assume $\ltriple v\rtriple, \ltriple v_1\rtriple \leq 1$ and, according 
to Proposition \ref{Proposition6}, that $\Vert u\Vert_{\frac 12-, \frac 34-}<O(1)$.

The restriction $|n^2-n_1^2+n_2^2-n_3^2|\gtrsim K$  in \eqref{(5.12)} may be removed arguing as follows: Let $0\leq \psi\leq 1$ 
be a parameter, and replace $\hat v(n, m)$ by $$e(n^2\psi)\hat v(n, m),$$ and 
$\hat v_1(n_1, m_1)$, $\hat u(n_2, m_2)$ and $\hat u(n_3, m_3)$ by $$e(n_1^2 \psi)\hat v_1(n_1, m_1), \quad
 e(n_2^2\psi)\hat u(n_2, m_2)\quad\textrm{and}\quad e(n_3^2\psi)\hat u(n_3, m_3),$$ respectively.  The restriction 
$|n^2-n_1^2 +n_2^2-n_3^2|\lesssim K$ may then be achieved by taking a suitable average over $\psi$.

It thus suffices to bound the expression
\be\label{(5.13)}
\sum_{\substack {n\sim N', n_i\sim N_i',\\m-m_1+m_2-m_3=0\\|m_2-n_2^2|\gtrsim |m_3-n_3^2|>K}}
  c(n, \bar n) \, \overline{\hat v(n, m)} \, \hat v_1(n_1, m_1) \, 
\overline {\hat u_2 (n_2, m_2)} \, \hat u_3(n_3, m_3)
\ee
with $\ltriple v\rtriple, \ltriple v_1\rtriple\leq 1$, 
$\Vert u_2\Vert_{\frac 12-,\frac 34-}< O(1)$ and $\Vert u_3\Vert_{\frac 12-, \frac 34-}<O(1)$.

To bound this quantity, we re-express \eqref{(5.13)} as
\be\label{(5.14)}
\int_B\int_0^1 \, \overline {K^{-\epsilon}(P_{n\sim N'}v)} \, 
[K^{-\epsilon}(P_{n_1\sim N_1'}v_1)] \, \overline
{K^{2\epsilon}(P_{n_2\sim N_2'}u_2)} \, [P_{n_3\sim N_3'} u_3]dtdx
\ee
with $\epsilon =10^{-6}$, where to simplify notation we have suppressed an additional Fourier restriction on the $u_2$ and $u_3$ factors.

Since the norm $\ltriple\,\cdot\,\rtriple$ indeed controls the norm $\Vert \cdot \Vert_{0, \frac 12-}$, and 
the condition $K^\epsilon >(N')^{\frac 1310^{-9}}$ holds by assumption, we may apply inequality (iii) 
of Lemma \ref{Lemma4} to obtain
\be
\label{(5.15)}
\Vert K^{-\epsilon} P_{n\sim N'} v\Vert_{L_x^3L_t^{4-}}<O(1)
\ee
and, similarly,
\be\label{(5.16)}
\Vert K^{-\epsilon} P_{n_1\sim N_1'} v_1\Vert_{L_x^3L^{4-}_t}< O(1).
\ee

On the other hand, using the Fourier restriction due to \eqref{(5.11)},
\begin{align*}
\Vert P_{n_2\sim N_2'} u_2\Vert_{\frac 12-, \frac 58}&< K^{-\frac 1{16}}\Vert u_2\Vert_{\frac 12-, \frac 34-},\\
\Vert P_{n_2\sim N_2'} u_2\Vert_{\frac 12+\epsilon, \frac 58}&< K^{-\frac 1{16}}(N_2')^\epsilon \Vert u_2\Vert_{\frac 12-, \frac 34-}.
\end{align*}
Applying Lemma \ref{Lemma4}, (v), it follows that
\be\label{(5.17)}
\Vert P_{n_2\sim N_2'} u_2\Vert_{L^p_x L^q_t}\lesssim K^{-\frac 1{16}}(N_2')^\epsilon \Vert u_2\Vert_{\frac 12-, \frac 34-}
\ee
with
\be\label{(5.18)}
p=\frac 6{1-\frac\epsilon 2},\quad q=\frac 4{1-\frac\epsilon 2}.
\ee

In addition, Lemma \ref{Lemma4} (v) gives
\be\label{(5.19)}
\Vert P_{n_3\sim N_3'} u_3\Vert_{L^{6-}_x L^{4-}_t} <O(1).
\ee

Combining \eqref{(5.15)}-\eqref{(5.19)}, we obtain
$$
|\eqref{(5.14)}|\lesssim (N_2')^{-\frac {10^{-3}}{16}+10^{-6}+2\cdot 10^{-9}} \lesssim (N_2')^{-10^{-5}}.
$$

Summing in $N'$ and $N'_1$ now gives the bound
\begin{align*}
(N'_2)^{-10^{-5}}[\log(N_2')]^2\lesssim (N'_2)^{-\frac{10^{-5}}{2}}
\end{align*}
for the contribution of these terms to \eqref{(4.12)}.

\noindent
\underline{\it Case 2}: $N'+N_1' >(N_2')^3$ and $n\not=n_1$.

In this case, we have
$$
N'+N_1'-(N_2')^2<|n^2-n_1^2+n_2^2-n_3^2|< |n_2^2-m_2| +|n_3^2-m_3|+2K
$$
and hence
$$
|n_2^2-m_2|>\frac 13(N'+N_1').
$$
This clearly allows us to repeat the analysis of Case 1 with $\frac{1}{3}(N'+N_1')$ in place of $K$, giving again the bound
\begin{align*}
(N_2')^{-10^{-5}}.
\end{align*}

\noindent
\underline{\it Case 3}: $N'=N_1'>(N_2')^3, n=n_1$.

Proceeding as in Case 1 above, we obtain
\be\label{(5.20)}
\sum_{n\sim N', n_i\sim N_i', m-m_1+m_2-m_3=0} c(n, n, n_2, n_3) A_{n,m,(n,n_2,n_3),\bar m} 
\ee
with $|m_2-n_2^2|>K$. Rewrite \eqref{(5.20)} as
\be\label{(5.21)}
\int_B\int_0^1 \Big[\sum_{n\sim N'} \, \overline {\hat v(n)}\, \hat v_1 
(n) e_n^2\Big]\, \overline{(P_{n_2\sim N_2'}\, u_2)} (P_{n_3\sim N_3'} u_3)dtdx.
\ee

Now, observe that it follows from \eqref{(5.17)} and \eqref{(5.18)} that
\be\label{(5.22)}
\Vert P_{n_2\sim N_2'} u_2\Vert_{L^p_x L^q_t}<(N_2')^{-10^{-5}},
\ee
while \eqref{(5.19)} gives
\be\label{(5.23)}
\Vert P_{n_3\sim N_3'} u_3\Vert_{L^{6-}_x L^{4-}_t} <O(1).
\ee

On the other hand, since $e_n^2(x)\leq \frac 1{|x|^2}$, the first factor in the integrand of \eqref{(5.21)} is 
bounded by
\be\label{(5.24)}
\frac 1{|x|^2} \Big(\sum_{n\sim N'} |\hat v(n)|^2\Big)^{\frac 12} \Big(\sum_{n\sim N'} |\hat v_1(n)|^2\Big)^{\frac 12}
\ee
where for any $q_1<\infty$ one has
$$
\Big \Vert \Big(\sum_n|\hat v(n)|^2\Big)^{\frac 12}\Big
\Vert_{L_t^{q_1}} =\Vert v\Vert_{L_t^{q_1}L_x^2}
\lesssim \lVert v\rVert_{0, \frac 12 -} \lesssim \ltriple v\rtriple,
$$
with the analogous bound for $v_1$.

It then follows that 
\be\label{(5.25)}
\Vert\eqref{(5.24)} \Vert_{L_x^{\frac 32-} L_t^{q_1}}<O(1).
\ee

Combining \eqref{(5.25)} with \eqref{(5.22)} along with \eqref{(5.23)} and summing in $N'$ now gives that the contribution of \eqref{(5.20)} is bounded by 
\begin{align*}
(N_2')^{-10^{-5}},
\end{align*}
completing the bound in this case.

\section
{Multilinear estimates (II)}

In this section, we estimate the remaining contributions, those of 
\eqref{(4.14)}, \eqref{(4.15)} and \eqref{(4.17)}.
This will involve a different type of analysis than that used in the 
previous section; in particular we will make essential use of several
further probabilistic considerations related to the solution map.

We begin with \eqref{(4.14)}.  Rewrite this quantity as a sum over $N', N_1'$ of
\be\label{(6.1)}
\Big|\int_0^1\Big[\sum_{n\sim N', n_i\sim N_i', n_2\not= n_3} c_K(n, \bar n)\, \overline{\hat v(n)} \, \hat v_1(n_1)\,
\overline{\hat u(n_2)} \, \hat u(n_3) \Big]dt\Big|.
\ee
Note that in the sum we necessarily have $n\not= n_1$, since otherwise
$$
N_2'+N_3' \leq |n_2^2 -n_3^2|\leq 10K = 10(N_2')^{10^{-3}},
$$
giving a contradiction.  

Hence, it follows that
$$
N'+N_1' \leq |n^2-n_1^2| \leq K+8(N_2')^2 < 9(N_2')^2.
$$

We first examine the contribution for $n\not= n_3$.
Denote $N', N_i'$ by $N, N_i$ for simplicity.
Since $\Vert v\Vert_{L^2_{t,x}}\lesssim 1$, it follows from Cauchy-Schwarz that \eqref{(6.1)} is
bounded by the $L^2_t$-norm of
\begin{align*}
&\Big[\sum_n\Big|\sum_{n_1, n_2, n_3}\hat v_1(n_1) \, \overline{\hat u(n_2)} \hat u(n_3) c_K(n, n_1, n_2,
n_3)\Big|^2\Big]^{\frac 12}\\
&\hspace{1.2in}\leq \Big[\sum_{n_1, n_1'} |\hat v_1(n_1)| \, |\hat v_1 (n_1')|\Big|
\sum_{\substack{n, n_2, n_2',\\ n_3, n_3'}}\, B_{n,\bar n,\bar n'}\Big|\Big]^{\frac 12}
\end{align*}
where
\begin{align*}
B_{n,\bar n,\bar n'}=\overline{\hat u(n_2)} \, \hat u(n_3) \hat u(n_2') \, 
\overline{\hat u(n_3')} \, c_K(n, n_1, n_2, n_3) c_K(n, n_1', n_2', n_3')
\end{align*}
and again by Cauchy-Schwarz
\begin{align}
\nonumber&\Big[\sum_{n_1}|\hat v_1(n_1)|^2\Big]^{\frac 12}
\Big[\sum_{n_1\not= n_1'}| B_{n,\bar n,\bar n'}|^2\Big]^{\frac 14}
+
\Big[\sum_{n_1} |\hat v_1(n_1)|^2\Big]^{\frac 12} \Big[\max_{n_1=n_1'} |
B_{n,\bar n,\bar n'}|\Big]^{\frac 12}\\
&\hspace{1.1in}\leq \Vert v_1(t)\Vert_{L^2_x} \Big\{\Big[\sum_{n_1\not= n_1'} |B_{n,\bar n,\bar n'}|^2\Big]^{\frac 14} +
\max_{n_1=n_1'} |B_{n,\bar n,\bar n'}|^{\frac 12}\Big\}.\label{(6.2)}
\end{align}

Since $\Vert v_1\Vert_{L_t^qL_x^2}\lesssim \ltriple v_1\rtriple<O(1)$ 
for all $q<\infty$, it suffices to bound
\be\label{(6.3)}
\Vert\{\cdots\}\Vert_{L_t^4}
\ee
where $\{\cdots\}$ is the quantity appearing in \eqref{(6.2)}.  

Note that \eqref{(6.3)} involves only the truncated solution $u$ with 
initial data $\phi=\phi_\omega$, and we view $u$ as a random variable 
of $\omega$.  For fixed $t$, the distribution of $u_\omega(t)$ is given
by a Gaussian Fourier series $$\sum_n \frac {g_n(\omega)}{n}
e_n$$ with $\{g_n\}$ as a sequence of IID normalized complex Gaussians.
This fact is essential to our analysis in this section.

For sufficiently large $q$, we may estimate
$$
\bigg(\mathbb E_\omega \big[\lVert \{\cdots\}\rVert_{L_t^4}^q\big]\bigg)^{\frac 1q} \leq \Vert \ \Vert \{\cdots\}\Vert_{L_\omega^q} \Vert_{L_t^4}\leq \max_{0\leq t\leq
1}\Vert\{\cdots\}\Vert_{L_\omega^ q}
$$
and, fixing $t$, we accordingly write 
\begin{align}
\nonumber&\Vert\{\cdots\}\Vert_{L_\omega^q}\leq\\
&\hspace{0.3in}\Big\{\sum_{n_1\not= n_1'} \Big\Vert \sum_{n, n_2, n_3, n_2', n_3'} \, 
\frac{\overline{g_{n_2}}}{n_2} \, 
\frac {g_{n_3}}{n_3} \, 
\frac {g_{n_2'}}{n_2'} \, \frac 
{\overline{g_{n_3'}}}{n_3'} c_K(n, \bar n)
c_K(n, \bar n')\Big\Vert^2_{L_\omega^{q/2}}\Big\}^{\frac 14}\label{(6.4)}\\
&\hspace{0.3in}+\Big\Vert \max_{n_1}\Big|\sum_{n, n_2, n_3, n_2', n_3'} \frac{\overline{g_{n_2}}}{n_2} \, \frac {g_{n_3}}{n_3}\,
\frac {g_{n_2'}}{n_2'} \, \frac {\overline{g_{n_3'}}}{n_3'}\,  c_K(n, \bar n) c_K(n, n_1, n_2', n_3')
\Big|\Big\Vert^{\frac 12}_{L_\omega^{q/2}}
\label{(6.5)}
\end{align}

We first analyze \eqref {(6.4)} by considering several cases, recalling that
$n_2\not= n_3$ and $n_2' \not= n_3'$.

\noindent
\underline{\it Case 1}: $n_2\not= n_2', n_3\not=n_3'$.

In this case, we note that the bound
$$
c_K(n, n_1, n_2, n_3) \lesssim N_3 \chi_{[| n^2-n_1^2+n_2^2-n_3^2|<K]}
$$
gives the estimate
\begin{align*}
&\mathbb E_\omega \bigg[\bigg|\sum_{n,n_2,n_3,n'_2,n'_3} \frac{\overline{g_{n_2}}}{n_2}\frac{g_{n_3}}{n_3}\frac{g_{n'_2}}{n'_2}\frac{\overline{g_{n'_3}}}{n'_3}c_K(n,\bar n)c_K(n,\bar n')\bigg|^2\bigg]\\
&\hspace{0.4in}\lesssim \frac 1{N_2^4}\sum_{n_2, n_2', n_3, n_3'} 
\Big(\sum_n \chi_{|n^2-n_1^2+n_2^2-n_3^2|<K} \, \chi_{|n^2-(n_1')^2 +(n_2')^2
-(n_3')^2|<K}\Big)^2\\
&\hspace{0.4in}\lesssim \frac{\sqrt K}{N_2^{4}} \sum_{n, n_2, n_2', n_3, n_3'} \chi_{|n^2-n_1^2+n_2^2-n_3^2|<K} \,
\chi_{|n^2-(n_1')^2 +(n_2')^2 -(n_3')^2|<K}.
\end{align*}
For the summation over $n_1$ and $n'_1$ in \eqref{(6.4)}, this gives the bound
\begin{align}\label{(6.6)}
\nonumber &\frac{\sqrt K}{N_2^{4}} \bigg|\bigg\{ (n, n_1, n_1', n_2, n_2', n_3, n_3') \, : \, n_i, n_i' \sim N_i, n\not= n_1, n_1',\,\textrm{and}\\
&\hspace{0.83in} |n^2-n_1^2 +n_2^2 -n_3^2|<K,\, |n^2-(n_1')^2+(n_2')^2 -(n_3')^2|<K \bigg\}\bigg|
\end{align}

Fix values of $k$, $k'$ with $|k|, |k'|<K$, and evaluate the number of 
solutions of the equations
\be\label{(6.7)}
\begin{cases} n^2-n_1^2+n_2^2-n_3^2 =k\\ n^2-(n_1')^2 +(n_2')^2 -(n_3')^2 =k'
\end{cases}
\ee
in the variables $n, n_1, n_1', n_2, n_2', n_3$ and $n_3'$.

For this purpose, further fix $n_2, n_2', n_3$.  Since $n\pm n_1$ are divisors of $k-n_2^2 +n_3^2\not=0$, this specifies $n, n_1$ up to $N_2^{0+}$ possibilities.
Next, writing
\be\label{(6.8)}
(n_1')^2 +(n_3')^2 =n^2 +(n_2')^2 -k'
\ee
the usual bounds for the number of $\mathbb Z^2$-points on circles (and circle arcs) imply that \eqref{(6.8)} has at
most $N_3^{0+}$ solutions in $(n_1', n_3')$.

Summarizing, this proves that
\be\label{(6.9)}
\eqref{(6.6)} <\sqrt K N_2^{-4} K^2 N_2^{2+} N_3 < N_2^{-1/2}
\ee

\noindent
\underline{\it Case 2}: $n_2=n_2', n_3\not= n_3'$.

We obtain
\begin{align}\label{(6.10)}
\nonumber &\mathbb E_\omega\bigg[\bigg|\sum_{n,n_2,n_3,n'_3} \frac{|g_{n_2}|^2}{(n_2)^2}\frac{g_{n_3}}{n_3}\frac{\overline{g_{n'_3}}}{n'_3}c_K(n,\bar n)c_K(n,n_1',n_2,n_3')\bigg|^2\bigg] \\
&\hspace{0.4in}\lesssim \frac 1{N^4_2} \sum_{n_3, n_3'}\Big(\sum_{n, n_2}
\chi_{|n^2-n_1^2+n_2^2 -n_3^2|<K} \cdot \chi_{|n^2 -(n_1')^2 +n^2_2 -(n_3')^2|<K}\Big)^2
\end{align} 
and since the number of $(n,n_2)$-terms in the inner sum is at most $KN_2^{0+}$ (for given $n_1, n_1', n_3, n_3'$), we obtain
\be\label{(6.11)}
\eqref{(6.10)} \ll N_2^{-4+} K \sum_{n, n_2, n_3, n_3'}\chi_{|n^2-n_1^2+n_2^2 -n_3^2|<K} \, \chi_{|n^2 -(n_1')^2 +n^2_2 -(n_3')^2|<K}.
\ee

Taking the summation of \eqref{(6.11)} over $n_1$ and $n_1'$ then gives the bound $$N_2^{-4+} K^3 N_2N_3 <N_2^{-1}$$ for the contribution to the sum in \eqref{(6.4)}.

\noindent
\underline{\it Case 3}: $n_2\not= n_2', n_3=n_3'$.

In place of \eqref{(6.10)}, we get
\be\label{(6.12)}
\frac 1{N^4_2} \sum_{n_2, n_2'} \Big(\sum_{n, n_3} 
\chi_{|n^2-n_1^2+n_2^2 -n_3^2|<K} \cdot \chi_{|n^2 -(n_1')^2 +(n_2') -n_3^2| <K}\Big)^2.
\ee

Writing $n^2-n_1^2 +n_2^2 -n_3^2=k$, $|k|<K$, in the inner sum, it follows that
$n\pm n_3$ divides $k+n_1^2 -n_2^2 \not=0$, since $n\not= n_3$.
Thus, there are at most $KN_2^{0+}$ terms in the inner sum and we obtain the bound
$$N_2^{-4+} K^3 N_2^2 N_3 <N_2^{-\frac 12}$$
for the contribution of \eqref{(6.12)} to the sum in \eqref{(6.4)}.

\noindent
\underline{\it Case 4}: $n_2=n_2', n_3=n_3'$.

In this case, the inner sum in \eqref{(6.4)} becomes
\be\label{(6.13)}
N_2^{2} N_3^{-2} \sum_{n, n_2, n_3} c_K (n, n_1, n_2, n_3) c_K(n, n_1', n_2, n_3).
\ee

It follows from the definition of $c_K$ that the quantity $\eqref{(6.13)}$ vanishes unless
$$
|n_1^2 -(n_1')^2|<2K 
$$
holds; note that this implies $N_1=O(K)$, since $n_1\not= n_1'$. Thus
\begin{align*}
\eqref{(6.13)} & < N_2^{-2} \Big|\Big\{ (n, n_2, n_3): n_2\sim N_2, n_3\sim N_3\text { and }
|n^2+n_2^2-n_3^2|\lesssim K^2\Big\}\Big|\\
&\lesssim N_2^{-2}K^2N_3N_2^{0+}\\
&<N_2^{-\frac 34}
\end{align*}
and the corresponding contribution to \eqref{(6.4)} is bounded by $N_2^{-\frac 14}$.

The considerations in Cases 1-4 take care of the estimate of \eqref{(6.4)}.

We next consider the estimate of \eqref{(6.5)}.  Note that the analogues of Cases 1, 2 and 3 in this setting are captured by the previous analysis, since we did not use the condition $n_1\not= n_1'$.

To treat the estimate in the analogue of Case 4, we bound the contribution to \eqref{(6.5)} by
\begin{align*}
&(\log N_1)\big[\max_{n_1\sim N_1} N_2^{-2} N_3^{-2}\sum_{n, n_2, n_3} 
c_K(n, n_1, n_2, n_3)^2\big]^{\frac 12}\\
&\hspace{0.8in}\lesssim (\log N_1) N_2^{-1} \Big[\max_{n_1\sim N_1} 
\sum_{n, n_2, n_3} \chi_{|n^2-n_1^2 +n^2_2-n_3^2 |<K}\Big]^{\frac 12}\\
&\hspace{0.8in}\lesssim (\log N_1)N_2^{-1} (KN_3 N_2^{0+})^{\frac 12} \\
&\hspace{0.8in}\lesssim N_2^{-\frac 13}.
\end{align*}

This completes the treatement of Case 4 for the estimate of \eqref{(6.5)}.  
Combining the estimates of \eqref{(6.4)} and \eqref{(6.5)} then completes 
the analysis of the contribution of terms where $n\neq n_3$.  

We now consider the terms for which $n_3=n$.  Note that, since under this 
condition we have
$$
|n_1^2 -n_2^2|\lesssim K=(N_2)^{10^{-3}},
$$
it also follows that $n_1=n_2$ in this setting.
We then estimate the contribution to \eqref{(6.1)} by
$$
\min (N, N_1) \int\Big[\sum_{n\sim N, n_1\sim N_1} |\hat v(n)| \, |\hat v_1(n_1)|
\, |\hat u(n)|\, |\hat u(n_1)|\Big]dt
$$
which, after using Cauchy-Schwarz, is in turn estimated by
$$
\min(N, N_1)\int\Vert P_{n\sim N} v\Vert_{L_x^2} \Vert P_{n\sim N}u\Vert_{L_x^2}
\Vert P_{n_1\sim N_1} v_1\Vert_{L^2_x} \Vert P_{n_1\sim N_1} u\Vert_{L_x^2} dt.
$$

Using H\"older and summing over $N$ and $N_1$, we obtain the bound
\begin{align}
\nonumber&\sum_{N, N_1} \min \{N, N_1\}\Vert P_{n\sim N} v\Vert_{L^2_{t,x}} \Vert P_{n\sim N} u\Vert_{L^6_tL^2_x} \\
&\hspace{1.2in}\cdot\Vert
P_{n_1\sim N_1} v_1\Vert _{L^6_tL^2_x} \Vert P_{n_1\sim N_1} u\Vert_{L^6_tL^2_x}.\label{(6.14)}
\end{align}

Moreover, since $\Vert \ \Vert_{L^q_tL_x^2}\lesssim \ltriple\,\cdot\,\rtriple$ holds 
for all $q$, and, by Lemma \ref{Lemma3}, 
$$\big(\sum_N\Vert P_{n\sim N} v\Vert^2_{L^2_{x, t}}\Big)^{\frac 12} =\Vert v\Vert_{L^2_{x, t}}
\lesssim \sqrt T \ltriple v\rtriple,$$ 
it follows from Cauchy-Schwarz that
\begin{align}
\nonumber \eqref{(6.14)}&\lesssim \sqrt T\Big\{\sum_N \Vert P_{n\sim N} u\Vert^2_{L^6_tL_x^2}\Big(\sum_{N_1} 
\min\{N, N_1\}\\
&\hspace{1.2in}\cdot\ltriple P_{n_1\sim N_1} v_1\rtriple\ 
\Vert P_{n_1\sim N_1} u\Vert_{L^6_tL_x^2}\Big)^2\Big\}^{\frac 12}.
\label{(6.15)}
\end{align}

To control the norms of projections of $u$ appearing in \eqref{(6.15)} we require the following
probabilistic estimate.
\begin{lemma}
\label{Lemma7}
Let $1\leq q<+\infty$ be given.  Then there exists $c>0$ such that for every $\lambda\geq 1$, one has
\begin{align}\label{(6.16)}
\mu_F^{(N)}\bigg(\bigg\{\phi:\max_N \,\,N^{1/2}\lVert P_{n\sim N}
u_\phi\rVert_{L_t^q L_x^2}>\lambda\bigg\}\bigg)\leq \exp(-c\lambda^c).
\end{align}
where the maximum is taken over dyadic integers $N$.
\end{lemma}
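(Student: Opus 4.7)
The plan is to combine the invariance of the Gibbs measure $\mu_G^{(N)}$ under the truncated flow \eqref{(1.1)} with the large deviation bounds for Gaussian series recorded in \eqref{(2.5)}--\eqref{(2.7)}. First, I would follow the splitting argument used at the start of the proof of Lemma \ref{Lemma5} in order to replace the free measure $\mu_F^{(N)}$ by the Gibbs measure $\mu_G^{(N)}$: on the event $\{\lVert \phi\rVert_{L_x^4}\leq \lambda_1\}$ the Radon--Nikodym derivative $d\mu_F^{(N)}/d\mu_G^{(N)}$ is controlled by $\exp(\lambda_1^4/4)$, while the complement has small $\mu_F^{(N)}$-measure by \eqref{(2.5)}. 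Optimizing $\lambda_1$ at the end then absorbs this factor into the right-hand side of \eqref{(6.16)}, so it suffices to prove the analogue of \eqref{(6.16)} under $\mu_G^{(N)}$.

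For a single dyadic block, Chebyshev's inequality at a moment $q_0\geq q$ to be chosen yields
\[
\mu_G^{(N)}\bigl(\bigl\{N^{1/2}\lVert P_{n\sim N} u\rVert_{L_t^q L_x^2}>\lambda\bigr\}\bigr) \leq \lambda^{-q_0} N^{q_0/2}\, \mathbb{E}_{\mu_G^{(N)}} \lVert P_{n\sim N} u\rVert_{L_t^q L_x^2}^{q_0}.
\]
Since $q_0\geq q$ and time ranges over $[0,1]$, Jensen's inequality gives
\[
\mathbb{E}_\omega\, \lVert P_{n\sim N} u\rVert_{L_t^q L_x^2}^{q_0} \leq \int_0^1 \mathbb{E}_\omega\, \lVert P_{n\sim N} u(t)\rVert_{L_x^2}^{q_0}\, dt,
\]
and the invariance of $\mu_G^{(N)}$ under the flow \eqref{(1.1)} identifies the integrand with the $t$-independent quantity $\mathbb{E}_\omega\lVert P_{n\sim N}\phi\rVert_{L_x^2}^{q_0}$.

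To bound this last moment I would use the approximate $L^2$-orthogonality of the eigenfunctions $(e_n)$ (via the expansion $e_n(x)=\sin(n\pi|x|)/|x|$ of \eqref{(2.1)} and integration in spherical coordinates, as in the proof of Lemma \ref{Lemma4}(vii)) to write
\[
\lVert P_{n\sim N}\phi\rVert_{L_x^2}^2 \sim \frac{1}{N^2}\sum_{n\sim N}|g_n(\omega)|^2 = \frac{1}{N^2}\Bigl(\#\{n\sim N\} + \sum_{n\sim N}(|g_n|^2-1)\Bigr).
\]
Estimate \eqref{(2.7)} controls the fluctuation in $L^{q_0/2}(d\omega)$ by $O(q_0\sqrt{N})$, which is dominated by the deterministic count $\#\{n\sim N\}\sim N$ as long as $q_0\lesssim \sqrt{N}$. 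This yields $(\mathbb{E}\lVert P_{n\sim N}\phi\rVert_{L_x^2}^{q_0})^{1/q_0}\lesssim N^{-1/2}$, and plugging back produces a sub-Gaussian single-block bound $(\sqrt{q_0}/\lambda)^{q_0}$; optimizing $q_0\sim \lambda^2$ gives $\exp(-c\lambda^2)$ in the regime $\lambda\lesssim N^{1/4}$.

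The last step is a union bound over dyadic $N$. In the range $\lambda\lesssim N^{1/4}$ the sub-Gaussian decay easily absorbs the $O(\log N)$ count of dyadic scales; for $\lambda\gtrsim N^{1/4}$, which is where my naive moment argument degenerates, I would couple the single-block estimate with the a priori $X^{s,b}$-control from Proposition \ref{Proposition6} (which dominates $N^{1/2}\lVert P_{n\sim N}u\rVert_{L_t^qL_x^2}$ up to a loss $N^{-(1/2-s)+}$ via Bernstein in time and the embeddings of Lemma \ref{Lemma4}) to trade a factor of $\lambda$ for a power of $N$. I expect the sole technical obstacle to be glueing these two regimes together uniformly in $N$; the probabilistic core of the lemma is a direct consequence of Gibbs invariance and \eqref{(2.7)}.
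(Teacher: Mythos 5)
Your probabilistic core coincides with the paper's: reduce from $\mu_F^{(N)}$ to the Gibbs measure exactly as in Lemma \ref{Lemma5}, use invariance of $\mu_G^{(N)}$ to replace $u_\phi(t)$ by the Gaussian data, and control each dyadic block $\sum_{n\sim N}|g_n|^2/n^2$ via \eqref{(2.7)} (the $e_n$ are in fact exactly orthogonal, so no ``approximate orthogonality'' input is needed). The divergence is in how the supremum over dyadic scales is handled, and this is where your argument has a genuine gap. Your single-block bound in the regime $\lambda\lesssim N^{1/4}$ is $\exp(-c\lambda^2)$ with no decay in $N$, and you then invoke a union bound over ``the $O(\log N)$ count of dyadic scales.'' But the number of nontrivial blocks is of order $\log$ of the truncation parameter (the superscript $N$ in $\mu_F^{(N)}$), which is not controlled by $\lambda$; your union bound therefore produces a factor $\log N_{\mathrm{trunc}}\cdot e^{-c\lambda^2}$, which is not of the form $\exp(-c\lambda^c)$ uniformly in the truncation --- and uniformity in the truncation is the whole point of the lemma. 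The constraint $q_0\lesssim\sqrt N$ (hence the case split at $\lambda\sim N^{1/4}$ and the detour through Proposition \ref{Proposition6}) is an artifact of insisting that the fluctuation $\sum_{n\sim N}(|g_n|^2-1)$ be dominated by its mean $\sim N$; if instead you compare it to the actual threshold $\sim N\lambda^2$ and optimize $q_0\sim\sqrt N\,\lambda^2$ in \eqref{(2.7)}, you obtain $\exp(-c\sqrt N\,\lambda^2)$ per block, which is summable over dyadic $N$ and removes both the union-bound problem and the need for the second regime. (In that fallback, note also that the loss from Proposition \ref{Proposition6} goes the other way, $N^{+(1/2-s)}$ rather than $N^{-(1/2-s)+}$; since $N\lesssim\lambda^4$ there and $s$ may be taken close to $1/2$ it would still give $\exp(-c\lambda^{c'})$, but it is unnecessary.)

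For comparison, the paper avoids any union bound and any case analysis by keeping $\max_N$ inside the $L^{q_1}(d\omega)$ moment: after Gibbs invariance it estimates $\max_N N\sum_{n\sim N}|g_n|^2/n^2$ by $1$ plus the sum over all dyadic $N$ of the centered blocks $\big|\sum_{n\sim N}\tfrac{N}{n^2}(|g_n|^2-1)\big|$, each of which has $L_\omega^{q_1}$-norm $\lesssim q_1 N^{-1/2}$ by \eqref{(2.7)}; the $N^{-1/2}$ decay makes the dyadic sum converge to $O(q_1)$, and a single optimization $q_1\sim\lambda^2$ then gives $\exp(-c\lambda^2)$ uniformly in the truncation. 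Your computation contains all the ingredients needed for either repair, but as written the passage from the single-block estimate to the maximum over scales does not close.
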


Assuming that Lemma \ref{Lemma7} holds, we use this bound to estimate \eqref{(6.15)} by
$$
\sqrt{T}\bigg\{\sum_N\Big(\sum_{N_1} \frac {\min \{N, N_1\}}{\sqrt{NN_1}}\ltriple P_{n_1\sim N_1} v_1\rtriple\Big)^2\bigg\}^{1/2} \lesssim
\sqrt{T}\ltriple v_1\rtriple.
$$
This leads to the bound $O(\sqrt T)$ on \eqref{(6.14)}.

This completes the analysis of the contribution of (4.14) except for the proof of Lemma $\ref{Lemma7}$, which we address presently.

\begin{proof}[Proof of Lemma $\ref{Lemma7}$]
We begin by noting that it suffices to establish
\begin{align}\label{(6.17)}
\mu_G(A_{\lambda})\leq \exp(-c\lambda^c).
\end{align}
with $A_{\lambda}:=\{\phi:\max_N N^{1/2}\lVert P_{n\sim N} u_\phi\rVert_{L_t^q([0,T_*);L_x^2(B))}>\lambda\}$.  
Indeed, arguing as in the proof of Lemma $\ref{Lemma5}$, \eqref{(6.17)} implies then an inequality of the type
\eqref{(6.16)}.

It therefore remains to establish \eqref{(6.17)}.  Toward this end, fixing $q_1>q$ and applying the Tchebychev
inequality and Plancherel identity followed by the Minkowski inequality, one obtains
\begin{align}
\nonumber \mu_G(A_{\lambda})&\leq \lambda^{-q_1}\bigg\lVert \max_N \,\,\bigg(N^{1/2}\lVert P_{n\sim N}
u_\phi\rVert_{L_t^qL_x^2}\bigg)\bigg\rVert_{L^{q_1}(d\mu_G)}^{q_1}\\
\nonumber &\lesssim \lambda^{-q_1}\bigg\lVert \max_N\,\, N^{1/2} \bigg(\sum_{n\sim N}
|\widehat{u}_\phi(n)|^2\bigg)^{1/2}\bigg\rVert_{L^{q_1}(d\mu_G)}\bigg\rVert_{L_t^{q}}^{q_1}\\
\nonumber &\lesssim \lambda^{-q_1}\bigg\lVert \max_N\,\, N^{1/2} \bigg(\sum_{n\sim N}
|\widehat{\phi}(n)|^2\bigg)^{1/2}\bigg\rVert_{L^{q_1}(d\mu_G)}^{q_1}\\
&\lesssim \lambda^{-q_1}\bigg\lVert \max_N\,\, N^{1/2}\bigg(\sum_{n\sim N}
\frac{|g_n(\omega)|^2}{n^2}\bigg)^{1/2}\bigg\rVert_{L_\omega^{q_1}}^{q_1}\label{(6.18)}
\end{align}
where we used the invariance of the Gibbs measure to obtain the third inequality.  

We therefore have
\begin{align*}
\eqref{(6.18)}&\lesssim \lambda^{-q_1}\bigg\{1+\bigg(\sum_{N}\bigg\lVert \sum_{n\sim N}
\frac{N}{n^2}(|g_n(\omega)|^2-1)\bigg\rVert_{L_\omega^{q_1}}\bigg)^{1/2}\bigg\}^{q_1}\\
&\lesssim \lambda^{-q_1}\bigg\{1+\bigg(\sum_N \frac{q_1}{\sqrt{N}}\bigg)^{1/2}\bigg\}^{q_1}\\
&\lesssim \bigg(\frac{\sqrt{q_1}}{\lambda}\bigg)^{q_1},
\end{align*}
where we used the estimate
\begin{align*}
\bigg\lVert \sum_{n\sim N} \frac{N}{n^2}(|g_n(\omega)|^2-1)\bigg\rVert_{L_\omega^{q_1}}\lesssim Nq_1\bigg(\sum_{n\sim
N} \frac{1}{n^4}\bigg)^{1/2}\lesssim \frac{q_1}{\sqrt{N}}
\end{align*}
which follows from \eqref{(2.7)}.  Optimizing the choice of $q_1$ (by essentially taking
$q_1=\lambda^2/2$; see, for instance, the proof of Lemma \ref{Lemma5}), now yields the desired claim.

This completes the proof of Lemma $\ref{Lemma7}$.
\end{proof}

It remains to bound the contributions of \eqref{(4.15)} and \eqref{(4.17)}.  We begin with \eqref{(4.15)}, 
for which we argue by expressing the inner sum in this expression as
$$
\sum_{n\sim N, n_1\sim N_1, n_2\sim N_2} c_K(n, n_1, n_2, n_2) \int_0^1 [\overline{\hat v(n)}
\hat v_1 (n_1) |\hat u(n_2)|^2]dt.
$$
Using Lemma $\ref{Lemma7}$, this is in turn bounded by
\begin{align*}
&N\int_0^1 \Big(\sum_{n\sim N} |\hat v(n)|\Big) \Big(\sum_{n_1\sim N_1}|\hat v_1(n_1)|\Big) \Big(\sum_{n_2\sim N_2}
|\hat u(n_2)|^2\Big)dt\\
&\hspace{0.4in}\leq N^{3/2} N_1^{1/2} \int_0^1\Vert P_{n\sim N} v\Vert_{L_x^2}
\Vert P_{n_1\sim N_1} v_1\Vert_{L_x^2} \, \Vert P_{n_2\sim N_2} u\Vert^2_{L^2_x} dt\\
&\hspace{0.4in}\lesssim (N_2)^{2\cdot 10^{-3}} \Vert P_{n\sim N} v\Vert_{L^4_tL^2_x} \, \Vert P_{n_1\sim N_1} v_1\Vert_{L^4_tL_x^2} \,
\Vert P_{n_2\sim N_2} u\Vert^2_{L^4_tL^2_x}\\
&\hspace{0.4in}\lesssim (N_2)^{2\cdot 10^{-3}-1}.
\end{align*}

We next consider \eqref{(4.17)}.  We use the Cauchy-Schwarz inequality to bound this expression by
\begin{align}
\nonumber
&\sum_{N}\Vert P_{n\sim N}v\Vert_{L^2_{t,x}}\Vert P_{n\sim N}v_1\Vert_{L_t^4L_x^2} \Big\Vert \max_{n\sim N} 
\Big|\sum_{n_2\sim N_2}
c(n, n, n_2, n_2)|\hat u(n_2)|^2-\sigma_{n, N_2}\Big|\, \Big\Vert_{L^4_t}\\
&\hspace{0.7in}\lesssim T^{1/2}\sup_{N}\Big\Vert \max_{n\sim N} \Big|\sum_{n_2\sim N_2} c(n, n, n_2, n_2)|\hat u(n_2)|^2-\sigma_{n, N_2}
\Big|\, \Big\Vert_{L^4_t}\label{(6.19)}
\end{align}
Recall that $$\sigma_n =\sigma_{n, N_2} =\mathbb E_\phi\Big[\sum_{n_2\sim N_2} c(n, n, n_2, n_2)
|\hat\phi(n_2)|^2\Big].$$

The bound on the second factor in \eqref{(6.19)} again follows from probabilistic considerations.  We have the following:

\begin{lemma}\label{Lemma8}
For $\lambda\gg 1$, we have for some constant $c>0$
\be
\mu_F\Big[\phi; \Big\Vert\max _n \Big|\sum_{n_2 \sim N_2}|\widehat{u_\phi(t)} (n_2)|^2 (c(n, n, n_2, n_2)-\sigma_n
\Big|\Big\Vert_{L_t^4} >\lambda\Big] \lesssim e^{-c\lambda^cN_2^c}\label{(6.20)}
\ee
\end{lemma}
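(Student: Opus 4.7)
The plan is to follow the same template as the proofs of Lemmas \ref{Lemma5} and \ref{Lemma7}. Let
$$S_n(t,\phi)=\sum_{n_2\sim N_2} c(n,n,n_2,n_2)\Big(|\widehat{u_\phi(t)}(n_2)|^2-\tfrac{1}{(n_2\pi)^2}\Big),$$
and note that since $\sigma_n$ is defined as an expectation under $\mu_F$, the random variable controlled in \eqref{(6.20)} differs from $S_n$ by an additive constant independent of both $\phi$ and $t$. Hence it suffices to bound the $\mu_F$-measure of $\{\,\|\max_n|S_n|\|_{L^4_t}>\lambda\}$.

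The first step will be the now-familiar transfer from $\mu_F$ to $\mu_G$, carried out exactly as in \eqref{(3.3)}--\eqref{(3.4)}: using the density $\tfrac{d\mu_G}{d\mu_F}=Z^{-1}\exp(-\tfrac14\|P_N\phi\|^4_{L^4})$ together with the Gaussian tail on $\|\phi\|_{L^4}$ provided by \eqref{(2.5)}, the task reduces to the analogous bound under $\mu_G$. Invariance of $\mu_G$ under the truncated flow then ensures $u_\phi(t)\stackrel{d}{=}\phi$ for every fixed $t$, and the pointwise bound $d\mu_G\leq Z^{-1}d\mu_F$ lets one estimate positive moments of $S_n(t,\cdot)$ under $\mu_G$ by the corresponding $\mu_F$-moments, under which $\widehat{u_\phi(t)}(n_2)$ has the distribution of $g_{n_2}(\omega)/(n_2\pi)$ for IID standard complex Gaussians $(g_{n_2})$. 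The second-chaos bound \eqref{(2.7)} together with the correlation estimate $|c(n,n,n_2,n_2)|\lesssim\min(n,n_2)\leq N_2$ from \eqref{(2.3)} then yields
$$\|S_n(t,\cdot)\|_{L^q(d\mu_G)}\lesssim q\Big(\sum_{n_2\sim N_2}\frac{|c(n,n,n_2,n_2)|^2}{n_2^4}\Big)^{1/2}\lesssim q\,N_2^{-1/2},$$
uniformly in $n$ and $t$.

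To convert this pointwise-in-time bound into a bound on the $L^4_t$-norm of $\max_n|S_n|$, the plan is to take $q\geq 4$, apply H\"older's inequality in $t$ and Fubini, and use that $\mathbb{E}_{\mu_G}|S_n(t,\cdot)|^q$ is independent of $t$ by invariance. This gives
$$\big\|\,\|\max_n|S_n|\,\|_{L^4_t}\big\|_{L^q(d\mu_G)}\lesssim T^{1/4}\,(\#n)^{1/q}\,q\,N_2^{-1/2}.$$
Chebyshev's inequality with the optimal choice $q\sim\lambda\,N_2^{1/2}T^{-1/4}$ then produces the claimed tail bound $\exp(-c\lambda^c N_2^c)$.

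The main technical care is concentrated in this last step: the crude union bound over $n$ contributes a factor $(\#n)^{1/q}$, whose logarithmic cost in the exponent must be dominated by the gain $\lambda N_2^{1/2}T^{-1/4}$. In the application from \eqref{(6.19)} this is harmless because the relevant range of $n$ is a single dyadic block $n\sim N$ with $N$ controlled by a power of the truncation parameter $N_*$ (and hence by a power of $N_2$, through the choice $T\sim 1/\log N_*$ from \eqref{(4.2)}), so the resulting logarithmic loss is absorbed into the dominant exponential factor provided $\lambda$ is sufficiently large.
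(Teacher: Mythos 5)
Your argument follows the paper's template faithfully up to and including the fixed-$n$ moment bound: the transfer $\mu_F\to\mu_G$, the use of Gibbs invariance to replace $\widehat{u_\phi(t)}(n_2)$ by Gaussians, and the second-chaos estimate \eqref{(2.7)} giving $\lVert S_n(t,\cdot)\rVert_{L^{q_1}}\lesssim q_1N_2^{-1/2}$ uniformly in $n$ and $t$ all match \eqref{(6.21)}. The gap is in how you pass from this to the $\max_n$. You handle the maximum by a crude union bound, costing $(\#n)^{1/q_1}$, and you justify its harmlessness by asserting that in the application the relevant $n$ range over a single dyadic block of size controlled by a power of $N_2$ (or $N_*$). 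That is not the case: in \eqref{(6.19)} the maximum is over $n\sim N$ together with a supremum over all dyadic $N$, and in \eqref{(4.16)}--\eqref{(4.18)} the frequency $n$ of the $v$, $v_1$ factors runs up to the full outer truncation parameter, which bears no relation to the dyadic block $N_2'$ (indeed $N_2'$ may be bounded while $N$ is arbitrarily large). Moreover, the lemma itself asserts a bound for an unrestricted $\max_n$ with failure probability $e^{-c\lambda^cN_2^c}$, i.e. smallness measured in $N_2$ alone. Quantitatively, Chebyshev with your union bound yields a failure probability of order $(\#n)\,(Cq_1N_2^{-1/2}/\lambda)^{q_1}$, which cannot be made $\lesssim e^{-c\lambda^cN_2^c}$ uniformly in $\#n$: one would need $\log(\#n)\lesssim \lambda N_2^{1/2}$, and nothing in the statement or the application guarantees this. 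So the proposal, as written, does not prove the stated lemma.

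The missing ingredient is the explicit structure of the correlation coefficient, which is how the paper tames the unbounded range of $n$: writing
\begin{equation*}
c(n,n,n_2,n_2)=\frac12\int_0^1\frac{\sin^2(\pi n_2r)}{r^2}\,dr-\frac12\int_0^1\cos(2\pi nr)\,\frac{\sin^2(\pi n_2r)}{r^2}\,dr,
\end{equation*}
as in \eqref{(6.22)}, the first term is independent of $n$, and the oscillatory second term is $O(N_2^4/n^2)$. Hence for $n>N_2^2$ the $n$-dependent part of $S_n$ is bounded, deterministically in $n$, by $O(N_2^2/n^2)\sum_{n_2\sim N_2}(|g_{n_2}|^2+1)$ -- a single $n$-independent random variable -- so its maximum over that range is controlled without any union bound, contributing $O(q_1N_2^{-1})$ in $L^{q_1}(d\omega)$. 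Only the finitely many $n\leq N_2^2$ then require a maximum/union bound, whose cost (a factor $\log N_2$, or $(N_2^2)^{1/q_1}$) is absorbed once $q_1\sim\lambda N_2^{1/3}$ is chosen, yielding \eqref{(6.20)}. To repair your proof you would need to incorporate this (or an equivalent) decay-in-$n$ argument; the choice $T\sim 1/\log N_*$ plays no role here and cannot substitute for it.
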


\begin{proof}
It suffices again to prove \eqref{(6.20)} with $\mu_F$ replaced by the Gibbs measure $\mu_G$.
Proceeding as in Lemma 6.1, take $q_1=q_1(\lambda)$ and write
\begin{align*}
&\bigg\Vert\,\bigg\Vert\max_n\Big|\sum_{n_2 \sim N_2}|\widehat{u_\phi(t)} (n_2)|^2 (c(n, n, n_2, n_2)-\sigma_n\Big|\bigg\Vert_{L_t^4}\bigg\Vert_{L^{q_1}(\mu_G(d\phi))}\\
&\hspace{0.6in}\leq\bigg\Vert\,\bigg\Vert\max_n \Big|\sum_{n_2 \sim N_2}|\widehat{u_\phi(t)} (n_2)|^2 (c(n, n, n_2, n_2)-\sigma_n\Big| \bigg\Vert_{L^{q_1}(\mu_G(d\phi))}\bigg\Vert_{L_t^4}.
\end{align*}
Using the Gibbs measure invariance under the flow, the above is bounded by
\begin{align}
\nonumber&\Big\Vert\max_n\Big| \sum_{n_2\sim N_2} |\hat\phi(n_2)|^2 c(n, n, n_2,
n_2)-\sigma_n\Big|\Big\Vert_{L^{q_1}(\mu_G(d\phi))}\\
&\hspace{0.6in}\leq \Big\Vert\max_n\Big|\sum_{n_2\sim N_2} \ \frac{c(n, n, n_2, n_2)}{n_2^2} (|g_{n_2} (\omega)|^2-1)\Big|
\, \Big\Vert_{L^{q_1}(d\omega)}.\label {(6.21)}
\end{align}
Note that
\begin{align}
\nonumber c(n, n, n_2, n_2)&=\int_0^1 \sin^2(\pi nr) \, \frac{\sin^2(\pi n_2r)}{r^2} dr\\
&= \frac 12\int_0^1 \frac {\sin^2(\pi n_2r)}{r^2} dr -\frac 12\int^1_0 \cos(2\pi nr) \, \frac {\sin^2(\pi n_2r)}{r^2} dr.
\label{(6.22)}
\end{align}
The second term in \eqref{(6.22)} is bounded by $O\big(\frac{N_2^4}{N^2}\big)$ for $n>N$, and therefore its 
contribution to \eqref{(6.21)} is at most
\begin{align*}
O\Big(\frac{N_2^2}{N^2}\Big)\Big\Vert\sum_{n_2\sim N_2} (|g_{n_2}(\omega)|^2+1) \Big\Vert_{L^{q_1}(d\omega)} 
<O\Big(\frac{q_1N_2^3}{N^2}\Big)
<O(q_1  N_2^{-1})
\end{align*}
for $N> N_2^2$.

Hence, we may restrict $n$ in \eqref{(6.21)} to the range $n\leq N_2^2$ and get the bound
\begin{align*}
O(\log N_2) \max_{n<N^2_2} \Big\Vert\sum_{n_2\sim N_2} \frac {c(n, n, n_2,
n_2)}{n_2^2}(|g_{n_2}(\omega)|^2-1)\Big\Vert_{L^{q_1}(d\omega)}
<O(\log N_2)q_1 N_2^{-\frac 12}.
\end{align*}
Taking $q_1\sim \lambda N_2^{\frac 13}$ and applying Tchebycheff's inequality, \eqref{(6.20)} follows.
\end{proof}

Having estimated the contributions of \eqref{(4.14)}, \eqref{(4.15)} and \eqref{(4.17)}, this completes our analysis of the nonlinear term \eqref{(4.5)}.

\section
{Further probabilistic considerations}

Returning to the nonlinear term \eqref{(4.1)}, an inspection of the estimates in Section 5 and Section 6 -- including Lemma \ref{Lemma7} and Lemma \ref{Lemma8}
-- as well as the non-probabilistic inequality \eqref{(4.19)} which determines the size of $T$, gives the following statement.

\begin{proposition}
\label{Proposition9}
Let $T$ be as in \eqref{(4.2)} and take $M_i\leq N_i$ for $i=2, 3$, $M=M_2+M_3$.
Moreover, let $u=u_\phi$ denote the solution of some truncated equation \eqref{(1.1)}.
Then
\begin{align}
\nonumber &\Bigltriple \int_0^t e^{i(t-\tau)\Delta} P_N[(P_{N_1} U^1) \, 
\overline{(P_{M_2\leq n\leq N_2} u)}\, (P_{M_3\leq n\leq N_3}u)](\tau) dt\Bigrtriple\\
&\hspace{3.2in}\leq 10^{-3}\ltriple U^1\rtriple
\label{(7.1)}
\end{align}
holds for all $U^1$ for which the right side is finite, assuming that $\phi$ is restricted to the complement of 
an exceptional set of measure at most $\exp(-M^c)$ (with $c>0$ some
constant).
\end{proposition}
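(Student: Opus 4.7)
The plan is to deduce this proposition as essentially a bookkeeping exercise on top of the work already carried out in Sections 4--6, with two modifications: the external factor $U^1$ takes the role that was played by an auxiliary function $v_1$ with $\ltriple v_1\rtriple\leq 1$, and the additional low-frequency cutoffs at $M_2,M_3$ produce extra smallness that yields both the small constant $10^{-3}$ in front of $\ltriple U^1\rtriple$ and the improved exceptional-set bound $\exp(-M^c)$.

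First I would apply Lemma \ref{Lemma2} to dualize the $\ltriple\cdot\rtriple$-norm on the left-hand side of \eqref{(7.1)}, reducing the claim to bounding
\[
\int_0^1\!\!\int_B \overline{v}\,(P_{N_1}U^1)\,\overline{P_{M_2\leq n\leq N_2}u}\,(P_{M_3\leq n\leq N_3}u)\,dx\,dt
\]
uniformly over test functions $v$ with $\ltriple v\rtriple\leq 1$. Then the Cauchy--Schwarz splitting of \eqref{(4.4)} decouples this into two bilinear pieces, each pairing one of $v,\,P_{N_1}U^1$ with a truncation of $u$. After this reduction the expression has exactly the form of \eqref{(4.5)} with $v_1$ replaced by $U^1$, and the entire dyadic decomposition of Section 4 into the pieces \eqref{(4.9)}--\eqref{(4.18)} is available; the only change is that the dyadic shells $N_2',N_3'$ are now restricted to $M_i\leq N_i'\leq N_i$.

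Next I would apply verbatim the estimates from Sections 5 and 6 to each piece. Every individual shell-by-shell bound obtained there is of the form $(N_2')^{-c}\ltriple U^1\rtriple$ for some fixed $c>0$ (e.g.\ the factor $(N_2')^{-10^{-4}/2}$ in \eqref{(5.6)}, the factor $(N_2')^{-10^{-5}}$ in Cases 1--3 of Section 5, and the analogous factors in Section 6), together with the non-probabilistic term \eqref{(4.19)} which is $o(1/\log N_*)$ and is absorbed by the choice \eqref{(4.2)} of $T$. Summing these geometrically convergent series over dyadic $N_2'\geq M_2,\ N_3'\geq M_3$ gives an overall bound $\lesssim M^{-c}\ltriple U^1\rtriple$, which is $\leq 10^{-3}\ltriple U^1\rtriple$ once $M$ is sufficiently large (absorbing the prefactor into the constant in the exceptional set).

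The main technical point, and the part that requires care, is the probabilistic accounting. Each application of Lemma \ref{Lemma5}, Lemma \ref{Lemma7}, or Lemma \ref{Lemma8} in Sections 5--6 excludes a set whose measure is at worst $\exp(-c(N_i')^c)$, where $N_i'$ is the frequency scale of the projection being controlled; one must revisit these inequalities and check that the exceptional set can indeed be taken to depend only on the scale $N_i'$ rather than on $N_*$. Granting this, summing over the dyadic shells $M_i\leq N_i'\leq N_i$ yields
\[
\sum_{N_2'\geq M_2}\exp(-c(N_2')^c)+\sum_{N_3'\geq M_3}\exp(-c(N_3')^c)\;\lesssim\;\exp(-cM^c),
\]
as claimed. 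I expect this last verification --- confirming that the probabilistic estimates used in Section 6 (in particular the maximal bound of Lemma \ref{Lemma7} and the diagonal bound of Lemma \ref{Lemma8}) localize properly to a single frequency shell with the declared tail --- to be the main obstacle, since some of the arguments invoked $L^q_\omega$-norms with $q$ depending on $N_*$ rather than on the shell under consideration, and those steps must be re-examined to produce a scale-localized exceptional set.
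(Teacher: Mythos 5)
Your proposal follows essentially the same route as the paper, whose own justification of Proposition 7.1 is precisely this inspection: dualization via Lemma 2.2, the Cauchy--Schwarz splitting (4.4), the dyadic decomposition of Section 4 with the $u$-shells restricted to $[M_i,N_i]$, the shell-wise gains of Sections 5--6 (plus the terms such as (4.19) and the diagonal contributions whose smallness comes from the choice (4.2) of $T$ rather than from $(N_2')^{-c}$ decay), and summation of the per-shell exceptional measures to reach $\exp(-M^c)$. The scale-localization of the probabilistic inputs (Lemmas 6.1 and 6.2) that you flag as the delicate point is exactly the ``revisiting of the probabilistic claims'' the paper itself invokes without detailing, so your account is faithful to, and no less complete than, the paper's argument.
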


Note that for $M$ small, we have the bound (cf. \eqref{(5.1)})
\begin{align}
\nonumber &\sup_{\ltriple v\rtriple\leq 1} \Big(\int_B\int_0^1 |P_Nv| \, |P_{N_1} U^1| \, |P_{M_2} u| \, |P_{M_3} u| dxdt\Big)\\
\nonumber &\hspace{0.8in}\leq \sup_{\ltriple v\rtriple \leq 1}\big(\Vert v\Vert_{L_{t,x}^2}\Vert U^1\Vert_{L^2_{t,x}} 
\Vert P_M u\Vert^2 _{L^\infty_t L_x^\infty}\big)\\
\nonumber &\hspace{0.8in}\lesssim
T\ltriple U^1\rtriple M^3 \Vert u\Vert^2_{L^\infty_tL_x^2} \\
&\hspace{0.8in}\leq TM^3 \Vert\phi\Vert^2_{L_{x}^2} \ltriple U^1\rtriple,
\label{(7.2)}
\end{align}
where the second inequality follows from Lemma \ref{Lemma3} and the third inequality is a consequence of the conservation of the $L_x^2$ norm under the flow.

Recalling also the discussion in Section 4 on how to treat \eqref{(4.1)} with solutions $u^{(2)}$ and $u^{(3)}$ 
obtained from different truncations, we obtain 
\begin{proposition}\label{Proposition10}
Let $T$ be given by \eqref{(4.2)}. Then,
\begin{align}
\nonumber &\Bigltriple \int_0^t e^{i(t-\tau)\Delta} P_N[(P_{N_1}U^1) \, \overline{(P_{N_2} u^{(2)})}
\, (P_{N_3} u^{(3)})](\tau) dr\Bigrtriple \\
&\hspace{3.2in}\leq 10^{-3} \ltriple U^1\rtriple
\end{align}
holds for all $U^1$ for which the right side is finite.
Here $u^{(i)}|_{t=0}=P_{N^{(i)}} \phi$ satisfies the $N^{(i)}$-truncated equation 
$(i= 2, 3)$ and  we assume $\phi$ is outside an
exceptional set of measure at most $O\big(\exp(-T^{-c})\big)$ (independent of $U^1$).
\end{proposition}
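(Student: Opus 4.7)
The plan is to deduce Proposition \ref{Proposition10} from Proposition \ref{Proposition9} by revisiting the Cauchy--Schwarz step of Section 4 (equations \eqref{(4.3)}--\eqref{(4.4)}), whose crucial feature is precisely that it \emph{decouples} the two factors $u^{(2)}$ and $u^{(3)}$ into separate quartic expressions, each involving a single solution.

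Concretely, I would first apply Lemma \ref{Lemma2} to dominate the $\ltriple\,\cdot\,\rtriple$-norm of the left-hand side by
$$
\sup_{\ltriple v\rtriple\leq 1}\bigg|\int_0^1\int_B \bar v\,(P_{N_1}U^1)\,\overline{(P_{N_2}u^{(2)})}\,(P_{N_3}u^{(3)})\,dx\,dt\bigg|,
$$
and then apply Cauchy--Schwarz exactly as in \eqref{(4.4)} to split this inner product into the product
$$
\bigg[\iint |v|^2\,|P_{N_2}u^{(2)}|^2\,dx\,dt\bigg]^{1/2}\cdot\bigg[\iint |P_{N_1}U^1|^2\,|P_{N_3}u^{(3)}|^2\,dx\,dt\bigg]^{1/2}.
$$
Each factor is now a quartic form in a single truncated solution (respectively $u^{(2)}$ and $u^{(3)}$), of exactly the type \eqref{(4.5)} analyzed in Sections 5 and 6 which underlies Proposition \ref{Proposition9}: the first factor corresponds to taking $(v,v_1,u)=(v,v,u^{(2)})$ in \eqref{(4.5)}, and the second to $(v,v_1,u)=(P_{N_1}U^1,P_{N_1}U^1,u^{(3)})$.

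For each factor, I would dyadically decompose $P_{N_j}u^{(j)}=\sum_{M_j\leq N_j}P_{n\sim M_j}u^{(j)}$ for $j=2,3$, and, for each pair $(M_2,M_3)$ with $M:=M_2+M_3$, apply the single-solution machinery of Proposition \ref{Proposition9}. For $M$ above a threshold $M_0$ to be chosen, Proposition \ref{Proposition9} supplies the desired bound at the cost of excluding an exceptional set in $\phi$ of measure $\exp(-M^c)$. For $M\leq M_0$, I would instead invoke the deterministic estimate \eqref{(7.2)}, which costs only a factor $TM^3\|\phi\|_{L^2}^2$ and thus requires only a Gaussian tail bound on $\|\phi\|_{L^2}$. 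The $\ltriple\,\cdot\,\rtriple$-orthogonality relation \eqref{(5.7)} together with dyadic Cauchy--Schwarz absorbs the summations in $M_2,M_3$ (and in the implicit $N',N_1'$) while retaining the small factor $10^{-3}$.

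The main obstacle lies in the probabilistic bookkeeping. With $T\sim 1/\log N_*$ as in \eqref{(4.2)}, admissibility of \eqref{(7.2)} for $M\leq M_0$ forces $M_0\lesssim T^{-1/3}\sim(\log N_*)^{1/3}$, while the union over dyadic $M\in(M_0,N_*]$ of the Proposition \ref{Proposition9} exceptional sets has total measure at most $O\bigl((\log N_*)\exp(-M_0^c)\bigr)$, which must be shown to fit into $\exp(-T^{-c'})$ for some $c'>0$. One must also confirm that taking exceptional sets simultaneously for both $u^{(2)}$ and $u^{(3)}$---legitimate since both depend only on the initial datum $\phi$, via the same realization of the Gaussian Fourier series---preserves the quantitative form of the measure estimate, and that the Section 5--6 arguments (which were written with a single truncated $u$) transfer verbatim once the Cauchy--Schwarz step has replaced $\overline{u^{(2)}}\,u^{(3)}$ by $|u^{(j)}|^2$. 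Balancing the threshold $M_0$ against these competing constraints is the delicate point of the proof.
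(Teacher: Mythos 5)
Your proposal follows essentially the same route as the paper: the paper obtains Proposition \ref{Proposition10} precisely by recalling the Lemma \ref{Lemma2} duality plus the Cauchy--Schwarz decoupling of \eqref{(4.3)}--\eqref{(4.4)}, which reduces the mixed-truncation trilinear term to the single-solution quartic forms \eqref{(4.5)} handled in Sections 5--6 (i.e.\ Proposition \ref{Proposition9}), with the low-frequency range $M$ small treated by the deterministic bound \eqref{(7.2)} and the exceptional sets summed to measure $O(\exp(-T^{-c}))$ using $T\sim 1/\log N_*$. Your bookkeeping of the threshold $M_0$ against the $\exp(-M^c)$ measures is exactly the (implicit) balancing the paper intends, so the argument is correct and not a different approach.
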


As we will see in the next section, Proposition \ref{Proposition10} suffices to establish almost sure 
convergence of the sequence $\{u^N\}$ of truncated solutions of \eqref{(1.1)}, letting $N$ run over the 
integers $2^j$ (or any sufficiently rapidly increasing sequence).  However, the measure estimates do not 
quite suffice to conclude immediately the a.s. convergence of the full sequence, and an additional 
consideration is needed.  The idea is basically the following: in view of Proposition \ref{Proposition9}, 
we obtain the desired measure estimates for factors $P_{n\geq M_2} u^{(2)}$ and $P_{n\geq M_3}u^{(3)}$ 
provided that for instance $M$ satisfies $$M=M_2+M_3>\big(\log(N^{(2)}+N^{(3)})\big)^C$$ with $C$ an appropriate constant.  

It then remains to consider
\be\label{(7.4)}
\int_0^t e^{i(t-T)\Delta} P_N[(P_{N_1} U^1)\, \overline{(P_M u^{(2)})} (P_M u^{(3)})] (\tau)d\tau.
\ee
Fix some truncation $M<N^{(0)} < N^{(2)}, N^{(3)}$ and let $u^{(0)} =P_{N^{(0)}} u^{(0)}$
be the corresponding solution of \eqref{(1.1)} with initial data $u^{(0)}|_{t=0}=P_{N^{(0)}} \phi$.

We compare \eqref{(7.4)} with
\be\label{(7.5)}
\int_0^t e^{i(t-\tau)\Delta}P_N[(P_{N_1} U^1) \, \overline{(P_Mu^{(0)})}\, (P_M u^{(0)})]
(\tau) d\tau.
\ee
The difference between \eqref{(7.4)} and \eqref{(7.5)} may then be bounded by
\begin{align}
&\nonumber \Vert P_{N_1} U^1\Vert_{L_t^4L_x^2}\Big[\Vert P_M u^{(0)}-P_M u^{(2)}
\Vert_{L_t^4L_x^\infty} + \Vert P_Mu^{(0)} -P_Mu^{(3)}\Vert_{L_t^4L_x^\infty}\Big]\Vert P_M u^{(0)}\Vert_{L_t^4L_x^\infty}\\
&\nonumber \hspace{0.2in}+ \Vert P_{N_1} U^1\Vert_{L_t^4L_x^2}\Vert P_Mu^{(0)}- P_M u^{(2)}\Vert_{L_t^4L_x^\infty} \Vert P_M u^{(0)} - P_M u^{(3)}\Vert_{L_t^4L_x^\infty}\\
&\nonumber \hspace{0.6in}\lesssim\ltriple P_{N_1} U^1\rtriple M^3 \Vert P_M\phi\Vert_{L_x^2} \\
&\nonumber \hspace{1.2in}\cdot\Big[\ltriple P_M u^{(0)}- P_Mu^{(2)} \rtriple +\ltriple P_M u^{(0)} -P_M u^{(3)}\rtriple\Big]\\
&\hspace{0.8in}+\ltriple P_{N_1} U^1\rtriple M^3 \ltriple P_M u^{(0)} -P_M u^{(2)}\rtriple \  \ltriple P_M u^{(0)} -P_M u^{(3)}\rtriple.
\label{(7.6)}
\end{align}
The interest of this construction is that in order to bound \eqref{(7.5)}, only exceptional sets related to $u_\phi^{(0)}$ have to be removed, while
the prefactor $M^3$ in \eqref{(7.6)} is harmless in view of the smallness of 
$\ltriple P_M u^{(0)} -P_M u^{(i)}\rtriple$, $i=2,3$.
This will be made more precise in the next section.

\section
{Proof of the theorem}

In this section, we complete the proof of our main theorem.  Toward this end, let $1\ll N_0<N$ be given. Our goal is to compare the solutions $u^{N_0}$ and $u^N$ of
\begin{align}
\label{(8.1)}
&\begin{cases}
iu_t^{N_0}+\Delta u^{N_0} -P_{N_0} (u^{N_0} |u^{N_0}|^2)=0\\ u^{N_0}(0)=P_{N_0}\phi\end{cases}
\intertext{and}
\label{(8.2)}
&\begin{cases}
iu_t^N+\Delta u^N - P_N(u^N|u^N|^2)=0\\
u^N(0)=P_N\phi\end{cases}
\end{align}
on a time interval $I=[0, \eta]$ with $\eta>0$ a sufficiently small constant.

Let $1\ll M\leq N_0$ and set
\be\label{(8.3)} T=\frac c{\log M}
\ee
with $c>0$ taken as in Proposition \ref{Proposition10} with $N_i\leq M$ for $i=2,3$.

The argument consists of dividing $[0, \eta]$ into time intervals of size $T$ and applying Duhamel's 
formula on each of these subintervals in order to obtain recursive inequalities.

Taking $0\leq t\leq T$, we have
$$
u^N(t) =e^{it\Delta} (P_N\phi) +i\int_0^t e^{i(t-\tau)\Delta} P_N(u_N|u^N|^2)(\tau)d\tau
$$
and
\be\label{(8.4)}
P_M(u^N-u^{N_0})(t)= i\int_0^t e^{i(t-\tau)\Delta} [P_M(u^N|u^N|^2) -P_M(u^{N_0}|u^{N_0}|^2)|(\tau)d\tau.
\ee
We will make an estimate of the $\ltriple\,\cdot\,\rtriple$ norm of this quantity.

We first replace $u^N$ and $u^{N_0}$ in \eqref{(8.4)} by $P_M u^N$ and $P_M u^{N_0}$, respectively.
The $\ltriple \cdot \rtriple$ norm of the difference may then be estimated by
\begin{align}
\nonumber&\Big[\Vert u^{N_0} -P_M u^{N_0}\Vert_{L_x^{3+} L_t^6} +\Vert u^N-P_M u^N\Vert_{L^{3+}_xL^6_t}\Big]
\\
&\hspace{1.4in}\cdot \Big[\Vert u^{N_0}\Vert^2_{L_x^{6-}L^6_t}+\Vert u^N\Vert^2_{L^{6-}_x L^6_t}\Big]< M^{-\frac 14},\label{(8.5)}
\end{align}
where we have used the a priori bound given by Lemma \ref{Lemma5}; again, \eqref{(8.5)} holds outside an exceptional set of measure at most $O(e^{-M^c})$.

We then obtain
\begin{align}
&\ltriple P_M(u^N-u^{N_0})\rtriple<M^{-\frac 14}\nonumber\\
&\hspace{0.8in}+\Bigltriple \int_0^t e^{i(t-\tau)\Delta} [P_M(u^N-u^{N_0})|P_M u^N|^2](\tau)d\tau
\Bigrtriple\label{(8.6)}\\
&\hspace{0.4in}+\Bigltriple \int_0^t e^{i(t-\tau)\Delta} [(P_M u^{N_0}) \, \overline{\big(P_M (u^N -u^{N_0})\big)} \, (P_M u^N)] (\tau) d\tau\Bigrtriple\label{(8.7)}\\
&\hspace{0.8in}+\Bigltriple \int_0^t e^{i(t-\tau)\Delta}[|P_M u^{N_0}|^2 (P_M(u^N-u^{N_0})](\tau) d\tau
\Bigrtriple.\label{(8.8)}
\end{align}

In view of Proposition \ref{Proposition10}, each of the terms \eqref{(8.6)}, \eqref{(8.7)}, \eqref{(8.8)}
may be bounded by $10^{-3}\ltriple P_M(u^N-u^{N_0})\rtriple$, provided that $\phi$ is taken outside an exceptional set of measure at most
$\exp(-T^{-c})$.
Note that this set depends on $N_0$ and $N$.
The preceding discussion then implies that
\be\label{(8.9)}
\ltriple P_M(u^N- u^{N_0})\rtriple_{0,\frac{1}{2};T} < 2M^{-\frac 14}
\ee
and an application of Lemma \ref{Lemma1} gives the existence of some $t_1\in [\frac T2, T]$ such that
\be\label{(8.10)}
\Vert P_M(u^N-u^{N_0}) (t_1)\Vert_{L_x^2} < 2 C_1 M^{-\frac 14}.
\ee

Consider now the next time interval $[t_1, t_1+T]$ and write for each $t\in [0, T]$
\be\label{(8.11)}
u^N(t_1+t)= e^{it\Delta}\big(u^N(t_1)\big) +i\int_0^t e^{i(t-\tau)\Delta} P_N(u^N|u^N|^2)(t_1+\tau) d\tau.
\ee
Repeating the above argument, we obtain
\begin{align}
\nonumber \ltriple P_M(u^N-u^{N_0})(t_1 +\cdot)\rtriple &\leq C_0\Vert P_M(u^N-u^{N_0})(t_1)\Vert_{L_x^2}+M^{-\frac 14}\\
\nonumber &\hspace{0.6in}+ \tfrac{3}{10^{3}}\ltriple P_M(u^N-u^{N_0})(t_1+\cdot)\rtriple
\end{align}
and thus
\begin{align}
&\ltriple P_M (u^N-u^{N_0})(t_1+\cdot)\rtriple < 2(2C_0C_1+1)M^{-\frac 14}\label{(8.12)}
\end{align}
for $\phi$ outside a set of measure at most $\exp(-T^{-c})$.

Note that the value of $t_1$ in \eqref{(8.10)} depends on $\phi$ but this does not create problems with the estimates of the nonlinear terms.

Again by Lemma \ref{Lemma1}, \eqref{(8.12)} gives $t_2 \in [t_1+\frac T2, t_1+T]$ with 
\be \label{(8.13)}
 \Vert P_M(u^N-u^{N_0})(t_2)\Vert_{L_x^2} < 2C_1(C_0C_1+1)M^{-\frac 14}.
\ee

Repeating this argument recursively, we obtain times $t_{j+1} \in [t_j+\frac T2, t_j+T]$ for each $j\geq 1$,with
\be\label{(8.14)}
\Vert P_M(u^N-u^{N_0})(t_{j+1})\Vert_{L_x^2} \leq 2C_1 \Big[C_0\Vert P_M(u^N-u_{N_0})(t_j)\Vert_2+M^{-\frac 14}\Big].
\ee
Iterating the resulting bounds gives
\be\label{(8.15)}
\Vert P_M(u^N-u^{N_0})(t_j)\Vert_{L_x^2} < (4C_1C_0)^j M^{-\frac 14}<M^{-\frac 18},
\ee
since $j\leq T^{-1} \eta= c^{-1} \eta\log M$ by \eqref{(8.3)}, and provided that $\eta$ is chosen sufficiently small.

Since
$$
\ltriple P_M (u^N-u^{N_0})(t_j+\cdot)\rtriple < M^{-\frac 18}
$$
for each $j$, it follows from Lemma \ref{Lemma1} that $$
\frac 1T\int_I\Vert P_M(u^N- u^{N_0})\Vert^2_{L^2_x} dt \lesssim M^{-\frac 14}
$$
for each subinterval $I\subset [0, \eta]$ of size $T$.  We therefore obtain
\be\label{(8.16)}
\Vert P_M(u^N-u^{N_0})\Vert_{L^2_{t<\eta}L^2_x} \lesssim M^{-\frac 18}
\ee
for $\phi$ outside an exceptional set of measure at most 
$\frac 1T \exp(-T^{-c})<\exp(-T^{-c'})$ (depending on $N_0$ and $N$).

In view of the apriori bounds of Proposition $\ref{Proposition6}$ on the quantities
$\Vert u^{N_0}\Vert_{X^{s, b}}$ and $\Vert u^N\Vert_{X^{s, b}}$ for $s<\frac 12$ and 
$b<\frac 34)$ and interpolation arguments, the bound \eqref{(8.16)} also implies
\be\label{(8.18)}
\Vert u^N -u^{N_0}\Vert_{X^{s, b}[0, \eta]}< M^{-c(s, b)}
\ee
for $s<\frac 12$ and $b<\frac 34$.

To consider the interval $[\eta, 2\eta]$, we repeat the previous reasoning with $M$ replaced by $M_1=M^c$ and 
$T$ by $T_1 =\frac c{\log M_1 }$.
This gives
\be\label{(8.18b)}
\Vert u^N- u^{N_0}\Vert_{X^{s, b}[\eta, 2\eta]} < M_1^{-c(s, b)}
\ee
and so on.

Starting from $M=N_0$, the above argument shows that for any given time interval $[0, T]=I$ with $T<\infty$, 
the estimate
\be\label{(8.19)}
\Vert u^N-u^{N_0}\Vert_{X^{s, b}(I)}< N_0^{-c(s, b, T)}
\ee 
holds for $s< \frac 12, b<\frac 34$ and all $\phi$ outside a set of measure at most $e^{-(\log N_0)^c}$,
depending on $N$ and $N_0$.  This statement clearly implies convergence of the sequence $\{u^N\}$, $N=2^j$ in 
$$\bigcap_{s<\frac 12, b<\frac 34} X^{s, b}(I)$$ almost surely in $\phi$.

Since the series $$\sum_{N\in\mathbb Z_+} e^{-(\log N)^c}$$ diverges, this does not immediately imply 
the convergence of the full sequence.
In order to achieve this improvement of the convergence properties, we use the procedure discussed at 
the end of Section 7.

Toward this end, fix $N_0\gg 1$ and let $N$ range between $N_0$ and $2N_0$.
In \eqref{(7.5)}, let $u^{(0)} = u^{N_0}$,
and take $M$ as the truncation $$K=(\log N_0)^C$$ with $C$ a sufficiently large constant.

On the other hand, in the inequality \eqref{(8.6)}--\eqref{(8.8)} above, $\log M\sim \log N_0$.  
Recalling \eqref{(7.6)}, the estimation of \eqref{(8.6)}--\eqref{(8.8)} gives some additional terms:
\begin{align}
\nonumber &\ltriple P_M(u^N-u^{N_0})\rtriple \\
&\hspace{0.6in}<M^{-\frac 14}+10^{-3} \ltriple P_M(u^N-u^{N_0})\rtriple\nonumber\\
&\hspace{1.1in}+K^3 \Vert P_K\phi\Vert_{L_x^2} \ \ltriple P_K(u^N-u^{N_0})\rtriple \,\, \ltriple P_M(u^N-u^{N_0})\rtriple\nonumber\\
&\hspace{1.1in}+K^3\ltriple P_K(u^N-u^{N_0})\rtriple^2 \ \ltriple P_M(u^N-u^{N_0})\rtriple\nonumber\\
&\hspace{0.6in}< M^{-\frac 14}+\big[ 10^{-3}+K^3\Vert\phi\Vert_{L_x^2} \ltriple P_M(u^N-u^{N_0})\rtriple\nonumber\\
&\hspace{1.1in}+K^3 \ltriple P_M(u^N- u^{N_0})\rtriple^2 \big]\cdot
\ltriple P_M(u^N-u^{N_0})\rtriple.\label{(8.20)}
\end{align}

The inequality \eqref{(8.20)} holds for $\phi$ outside an exceptional set which is the union of a set of measure at most $e^{-(\log N_0)^c}$ depending on
$N_0$ and an exceptional set of measure at most $e^{-K^c}<N_0^{-2}$ depending on $N$.

Taking $\Vert\phi\Vert_{L_x^2}<K$ in \eqref{(8.20)} and recalling that $\log M\sim \log N_0$, we may again 
conclude \eqref{(8.9)}, which is now valid for all $N_0\leq N\leq 2N_0$ and $\phi$ outside an exceptional 
set of measure at most $e^{-(\log N_0)^{c}}$.

This completes the proof of the main theorem.


\begin{thebibliography}{99}

\bibitem{B-GAFA1} J. Bourgain. Fourier transform restriction phenomena for certain lattice subsets and applications to nonlinear 
evolution equations. I. Schr\"odinger equations. Geom. Funct. Anal. 3 (1993), no. 2, 107-156.

\bibitem{B-GAFA2} J. Bourgain. Fourier transform restriction phenomena for certain lattice subsets and applications to nonlinear evolution
equations. II. The KdV-equation. Geom. Funct. Anal. 3 (1993), no. 3, 209-262.

\bibitem{B1}  J. Bourgain. Periodic nonlinear Schr\"odinger equation in invariant measures. Comm. Math. Phys. 166 (1994), 1-24.

\bibitem{B2} J. Bourgain. On the Cauchy and invariant measure problem for the periodic Zakharov system. Duke Math. J. 76 (1994), no. 1, 175–202.

\bibitem{B3} J. Bourgain.  Invariant measures for the 2D-defocusing nonlinear Schr\"odinger equation. Comm. Math. Phys. 176 (1996), 421-445.

\bibitem{B4} J. Bourgain.  Invariant measures for NLS in infinite volume.  Comm. Math. Phys. 210 (2000), no. 3, 605–620.

\bibitem{B5}  J. Bourgain. Nonlinear Sch\"odinger equations. Hyperbolic equations and frequency interactions (Park City, UT, 1995), 3–157, IAS/Park
City Math. Ser., 5, Amer. Math. Soc., Providence, RI, 1999.

\bibitem{BB} J. Bourgain and A. Bulut. Gibbs measure evolution in radial nonlinear wave and Schr\"odinger equations on the ball.  Comptes Rendus
Math. 350 (2012) 11-12, pp. 571--575.

\bibitem{BB1} J. Bourgain and A. Bulut. Invariant Gibbs measure evolution for the radial nonlinear wave equation on the three dimensional ball.
Preprint. (2012)

\bibitem{BB2} J. Bourgain and A. Bulut. Almost sure global well posedness for radial NLS on the unit ball I: the 2D case.  Preprint. (2012)

\bibitem{BT12} N. Burq and N. Tzvetkov.  Random data Cauchy theory for supercritical wave equations. I. Local theory and II. A global existence
result.  Invent. Math. 173 (2008), no. 3, 449-475 and 477-496.
\end{thebibliography}
\end{document}